\newtheorem{theorem}{Theorem}[section]
\newtheorem{Lemma}[theorem]{{\bf Lemma}}
\newtheorem{rem}[theorem]{{\bf Remark}}
\newtheorem{definition}{Definition}[section]
\numberwithin{equation}{section}
\newenvironment{proof}{\indent{\em Proof:}}{\quad \hfill
$\Box$\vspace*{2ex}}
\font\Bbb=msbm10 at 12pt
\newcommand{\R}{\mbox{\Bbb R}}
\newcommand{\N}{\mbox{\Bbb N}}
\begin{document}
\setcounter{page}{1}
\begin{center}
\vspace{0.4cm} {\large{\bf On the Nonlinear $\Psi$-Hilfer Hybrid Fractional  Differential Equations}} \\
\vspace{0.5cm}
Kishor D. Kucche $^{1}$ \\
kdkucche@gmail.com \\

\vspace{0.35cm}
Ashwini D. Mali  $^{2}$\\
maliashwini144@gmail.com\\

\vspace{0.35cm}
$^{1,2}$ Department of Mathematics, Shivaji University, Kolhapur-416 004, Maharashtra, India.\\
\end{center}

\def\baselinestretch{1.0}\small\normalsize

\begin{abstract}
In this paper, we initially derive the equivalent fractional integral equation to $\Psi$-Hilfer hybrid fractional differential equations and through it, we prove the existence of a solution in the weighted space. The primary objective of the paper is to obtain estimates on $\Psi$-Hilfer derivative and utilize it to derive the hybrid fractional differential inequalities involving $\Psi$-Hilfer derivative. With the assistance of these fractional differential inequalities, we determine the existence of extremal solutions, comparison theorems and uniqueness of the solution.

\end{abstract}
\noindent\textbf{Key words:}  $\Psi$-Hilfer fractional derivative; Fractional differential inequalities; Existence and uniqueness; Extremal solutions; Comparison theorems.\\
\noindent
\textbf{2010 Mathematics Subject Classification:} 34A38, 26A33, 34A12, 34A40.
\def\baselinestretch{1.5}
\allowdisplaybreaks
\section{Introduction}
The classical theory of fractional differential equations (FDEs) relating to inequalities and comparison results has developed by Lakshmikantham et al.\cite{V. Lak1, Lak1, V. Lak2} and utilized this theory to demonstrate the qualitative and quantitative properties of the solution of various classes of nonlinear FDEs.
On the other hand, Dhage and Lakshmikantham\cite{Dhage1} inducted the study of integer order hybrid nonlinear differential equations. They have proved an existence theorem and built differential inequalities. The obtained inequalities at that point used to investigate the existence of extremal solutions and a comparison result. On the lines of \cite{Dhage1},  Zhao et al.\cite{Zhao} have developed the theory of hybrid FDEs involving Riemann--Liouville (RL) fractional  derivative operator and acquired fundamental fractional differential inequalities, the existence of extremal solutions and comparison principle. Other related works on the hybrid FDEs may be found in \cite{Dhage2, Herzallah, HybAhmad1, HybAhmad2, HybAhmad3, Ferraoun, Sitho, Sun1, Caballero, Sun2, Mahmudov}.

The idea of the fractional derivative with respect to another function is introduced by Kilbas et al.\cite{Kilbas} in the sense of RL fractional derivative. On a comparable line, Almeida\cite{Almeida} presented $\Psi$-Caputo fractional derivative and investigated many fascinating properties of this operator. Hilfer\cite{Hilfer} presented fractional derivative operator $\mathcal{D}_{a^+}^{\mu ,\nu }(\cdot)$  with  two parameters  $\mu \in (n-1, n), n \in \mathbb{N}$  and  $\nu ~(0\leq \nu \leq 1)$. The Hilfer derivative $\mathcal{D}_{a^+}^{\mu ,\nu }(\cdot)$ unifies the theory of FDEs involving  RL fractional derivative ($\nu=0$) and Caputo fractional derivative ($\nu=1$). The calculus of Hilfer derivative and the analysis of nonlinear FDEs involving it very well may be found in \cite{Furati, S. Abbas}.

 In \cite{Vanterler1}, Sousa and Oliveira presented the Hilfer version of the fractional derivative with respect to another function which we refer in the present paper as $\Psi$-Hilfer fractional derivative. The importance of the $\Psi$-Hilfer fractional derivative lies in the fact that it incorporates several well recognized fractional derivative operators as it's particular cases, for example, RL\cite{Kilbas}, Caputo\cite{Kilbas}, $\Psi$-RL\cite{Kilbas},  Hadmard\cite{Kilbas},  Riesz\cite{Kilbas}, Erd$\acute{e}$ly-Kober\cite{Kilbas}, $\Psi$-Caputo\cite{Almeida}, Katugampola\cite{Katugampola}, Hilfer\cite{Hilfer}  and so forth. In this way various properties of solutions of FDEs involving different fractional derivative operators recorded in \cite{Vanterler1} can be analyzed under one roof with a single fractional derivative operator.  Investigation of initial and boundary value problems for $\Psi$-Hilfer  about   existence,   uniqueness, data dependence and Ulam-Hyers stabilities may be found in  \cite{jose1, jose2,vanterler3, j1,  Mali1, Mali2, Mali3, Kharade, MS Abdo1, MS Abdo2}.


Motivated by the work of \cite{Dhage1, Zhao},  in  the present paper, we consider the following $\Psi$-Hilfer hybrid FDEs of the form
 \begin{align}
  & ^H \mathcal{D}^{\mu,\,\nu\,;\, \Psi}_{0^+}\left[ \frac{y(t)}{ f(t, y(t))}\right] 
   = g(t, y(t)),~a.e. ~t \in  (0,\,T],  ~\label{eqq1}\\
   & \left( \Psi \left( t\right) -\Psi \left( 0\right) \right)^{1-\xi }y(t)|_{t=0}=y_{0} \in\R ,\label{eqq2}
 \end{align}
   where $0<\mu<1, ~0\leq\nu\leq 1,~ \xi=\mu+\nu(1-\mu)$, ~$^H \mathcal{D}^{\mu,\nu;\, \Psi}_{0^+}(\cdot)$ is the $\Psi$-Hilfer fractional derivative of order $\mu$ and type $\nu$, $f\in C(J \times \R  \,, \R\setminus\{0\}) $ is bounded, $J=[0,T]$ and   
   $
   g\in \mathfrak{C}(J \times \R  \,, \R)= \{ h ~|~ \text{the map}~\omega \to h(\tau,\omega)\; \text{is  continuous } \;\text{for each} ~\tau \;\text{and the map} ~\tau \to h(\tau,\omega) \\
   \;\text{is measurable} \;\text{for each} ~\omega \}.
    $

 We obtain, the equivalent fractional integral equation to the $\Psi$-Hilfer hybrid FDEs \eqref{eqq1}-\eqref{eqq2} and  establish the existence of solution in the weighted space 
   $ C_{1-\xi ;\, \Psi }(J,\,\R)$.  Our main objective here is to obtain estimates on $\Psi$-Hilfer derivative and utilize it to develop the hybrid fractional differential inequalities involving $\Psi$-Hilfer derivative. Using the fractional differential inequalities in the setting of $\Psi$-Hilfer derivative, we then derive the existence of extremal solutions, comparison results and uniqueness of the solution.
\begin{itemize}[topsep=0pt,itemsep=-1ex,partopsep=1ex,parsep=1ex]
\item For $ ~\nu=0, ~\Psi(t)=t ,~y_{0}=0 $, the obtained outcomes in the current paper incorporates the investigation of \cite{Zhao} relating to nonlinear hybrid FDEs of the form
\begin{align*}
  & ^{RL} \mathcal{D}^{\mu}_{0^+}\left[ \frac{y(t)}{ f(t, y(t))}\right] 
   = g(t, y(t)),~a.e. ~t \in  (0,\,T],\\
   & y(0)=0 .
 \end{align*} 
 \item For $\mu=1, ~\nu=1, ~\Psi(t)=t $, the obtained outcomes in the current paper incorporates the investigation of \cite{Dhage1} pertaining to nonlinear integer order hybrid differential equations of the form 
 \begin{align*}
   & \frac{d}{dt}\left[ \frac{y(t)}{ f(t, y(t))}\right] 
    = g(t, y(t)),~a.e. ~t \in  (0,\,T],\\
    & y(0)=y_{0} \in\R .
  \end{align*}
\item For $f=1$, the obtained results are applicable to nonlinear $\Psi$-Hilfer FDEs of the form
 \begin{align*}
  & ^H \mathcal{D}^{\mu,\,\nu\,;\, \Psi}_{0^+}y(t)
   = g(t, y(t)),~a.e. ~t \in  (0,\,T],  \\
   & \left( \Psi \left( t\right) -\Psi \left( 0\right) \right)^{1-\xi }y(t)|_{t=0}=y_{0} \in\R.
 \end{align*}
\item  For $f=1,$ ~$\nu=0$ (in this case $\xi=\mu$), \,$\Psi(t)=t $, the acquired results includes the study of nonlinear FDEs involving Riemann--Liouville derivative\cite{Lak3}
 \begin{align*}
  & ^{RL} \mathcal{D}^{\mu}_{0^+}y(t)
   = g(t, y(t)),~a.e. ~t \in  (0,\,T],  \\
   & [t^{1-\mu}y(t)]_{t=0}=y_{0} \in\R .
 \end{align*}
\item  For $f=1, ~\nu=1$ (in this case $\xi=1$), $\,\Psi(t)=t $, the acquired results includes the study of nonlinear FDEs involving Caputo derivative\cite{V. Lak1}
 \begin{align*}
  & ^{C} \mathcal{D}^{\mu}_{0^+}y(t)
   = g(t, y(t)),~a.e. ~t \in  (0,\,T], \\
   & y(0)=y_{0} \in\R.
 \end{align*}
\end{itemize}
We have referenced above just a couple of extraordinary cases.   Aside from these, for various selections of parameters $\mu$, $\nu$ and the function $\Psi$, the obtained outcomes in the current paper additionally hold for nonlinear hybrid FDEs with well known fractional derivative operators recorded in \cite{Vanterler1}, for example,  RL, Caputo,$\Psi$-RL, $\Psi$-Caputo, Hadmard, Katugampola, Riesz, Erd$\acute{e}$ly-Kober, Hilfer and so forth.

The plan of the paper is as follows:   In the section 2, we give some definitions and results which are useful to prove the main results. In section 3, we derive an equivalent integral equation(IE) to the problem \eqref{eqq1}-\eqref{eqq2} and establish existence of solution to it. In section 4, we obtain estimates on $\Psi$-Hilfer derivative. Section 5 deals with hybrid fractional differential inequalities involving $\Psi$-Hilfer derivative.  Section 6, contribute the study of maximal and minimal solutions. In section 7, we obtain comparison theorems. Finally, uniqueness result is proved in the section 8. 

\section{Preliminaries} \label{preliminaries}
 
Let $[a,b]$ $(0<a<b<\infty)$ be a finite interval and $\Psi\in C^{1}([a,b],\mathbb{R})$ be an increasing function such that $\Psi'(t)\neq 0$, $\forall~ t\in [a,b]$.  We consider the  weighted space 
\begin{equation*}
C_{1-\xi ;\, \Psi }\left[ a,b\right] =\left\{ h:\left( a,b\right]
\rightarrow \mathbb{R}~\big|~\left( \Psi \left( t\right) -\Psi \left(
a\right) \right) ^{1-\xi }h\left( t\right) \in C\left[ a,b\right]
\right\} ,\text{ }0< \xi \leq 1,
\end{equation*}
endowed with the norm
\begin{equation}\label{space1}
\left\Vert h\right\Vert _{C_{1-\xi ;\Psi }\left[ a,b\right] }=\underset{t\in \left[ a,b\right] 
}{\max }\left\vert \left( \Psi \left( t\right) -\Psi \left( a\right) \right)
^{1-\xi }h\left( t\right) \right\vert.
\end{equation}
The weighted space $C_{1-\xi \, ;\Psi }(\left[ a,b\right] ,\,\R)$ is a partially ordered Banach space  with the norm $\left\Vert \cdot\right\Vert _{C_{1-\xi ;\Psi }\left[ a,b\right] }$ and the partial ordering relation $\preceq$ defined by
$$
h_1\preceq h_2~  \text{if and only if}~ \left( \Psi \left( t\right) -\Psi \left( a\right) \right)
^{1-\xi }h_1\left( t\right)\leq  \left( \Psi \left( t\right) -\Psi \left( a\right) \right)
^{1-\xi }h_2\left( t\right),~ t\in \left[ a,b\right],$$ 
where $h_1, h_2\in C_{1-\xi \, ;\Psi }(\left[ a,b\right] ,\,\R)$.
Note that
\begin{enumerate}[topsep=0pt,itemsep=-1ex,partopsep=1ex,parsep=1ex]
\item [(i)] $
h_1\prec h_2~  \text{if and only if}~ \left( \Psi \left( t\right) -\Psi \left( a\right) \right)
^{1-\xi }h_1\left( t\right) <  \left( \Psi \left( t\right) -\Psi \left( a\right) \right)
^{1-\xi }h_2\left( t\right),~t\in \left[ a,b\right];$
\item [(ii)] $
h_1= h_2~  \text{if and only if}~ \left( \Psi \left( t\right) -\Psi \left( a\right) \right)
^{1-\xi }h_1\left( t\right) =  \left( \Psi \left( t\right) -\Psi \left( a\right) \right)
^{1-\xi }h_2\left( t\right),~t\in \left[ a,b\right];$
\item [(iii)] $
h_1\succ h_2~  \text{if and only if}~ \left( \Psi \left( t\right) -\Psi \left( a\right) \right)
^{1-\xi }h_1\left( t\right) >  \left( \Psi \left( t\right) -\Psi \left( a\right) \right)
^{1-\xi }h_2\left( t\right),~t\in \left[ a,b\right].$
\end{enumerate}

\begin{definition} [\cite{Kilbas}]
Let  $h$ be an integrable function defined on $[a,b]$. Then the $\Psi$-Riemann-Liouville fractional integral of order $\mu>0 ~(\mu \in \R)$ of the function $h$ is given by 
\begin{equation}\label{P1}
I_{a^+}^{\mu \, ;\Psi }h\left( t\right) =\frac{1}{\Gamma \left( \mu
\right) }\int_{a}^{t}\Psi ^{\prime }\left( s\right) \left( \Psi \left(
t\right) -\Psi \left( s\right) \right) ^{\mu -1}h\left( s\right) ds.
\end{equation}
\end{definition}

\begin{definition} [\cite{Vanterler1}]
The $\Psi$-Hilfer fractional derivative of a function $h$ of order $0<\mu<1$ and type $0\leq \nu \leq 1$, is defined by
$$^H \mathcal{D}^{\mu, \, \nu; \, \Psi}_{a^+}h(t)= I_{a^+}^{\nu ({1-\mu});\, \Psi} \left(\frac{1}{{\Psi}^{'}(t)}\frac{d}{dt}\right)I_{a^+}^{(1-\nu)(1-\mu);\, \Psi} h(t).$$
\end{definition}
\begin{Lemma}[\cite{Kilbas,Vanterler1}]\label{lema2} 
Let $\chi, \delta>0$ and $\rho>n$.  Then
\begin{enumerate}[topsep=0pt,itemsep=-1ex,partopsep=1ex,parsep=1ex]
\item [(i)] $\mathcal{I}_{a^+}^{\mu \, ;\,\Psi }\mathcal{I}_{a^+}^{\chi \, ;\,\Psi }h(t)=\mathcal{I}_{a^+}^{\mu+\chi \, ;\,\Psi }h(t)$.
\item [(ii)]$
\mathcal{I}_{a^+}^{\mu \, ;\,\Psi }\left( \Psi \left( t\right) -\Psi \left( a\right)
\right) ^{\delta -1}=\frac{\Gamma \left( \delta \right) }{\Gamma \left(
\mu +\delta \right) }\left( \Psi \left(t\right) -\Psi \left( a\right)
\right) ^{\mu +\delta -1}.
$
\item [(iii)]$^{H}\mathcal{D}_{a^+}^{\mu ,\,\nu \, ;\,\Psi }\left( \Psi \left(t\right) -\Psi \left( a\right)
\right) ^{\xi -1}=0.
$
\item [(iv)]$^{H}\mathcal{D}_{a+}^{\mu ,\,\nu\, ;\,\psi }\left( \psi \left(t\right) -\psi \left( a\right)
\right) ^{\rho -1}=\frac{\Gamma \left( \rho \right) }{\Gamma \left(
\rho -\alpha \right) }\left( \psi \left(t\right) -\psi \left( a\right)
\right) ^{\mu -\rho -1}.
$
\end{enumerate}
\end{Lemma}

\begin{Lemma}[\cite{Vanterler1}]\label{teo1} 
If $h\in C^{n}[a,b]$, $n-1<\mu<n$ and $0\leq \nu \leq 1$, then
\begin{enumerate}[topsep=0pt,itemsep=-1ex,partopsep=1ex,parsep=1ex]
\item [(i)]$
I_{a^+}^{\mu \, ;\Psi }\text{ }^{H}\mathcal{D}_{a^+}^{\mu ,\nu \, ;\Psi }h\left( t\right) =h\left(t\right) -\overset{n}{\underset{k=1}{\sum }}\frac{\left( \Psi \left( t\right) -\Psi \left( a\right) \right) ^{\xi -k}}{\Gamma \left( \xi -k+1\right) }h_{\Psi }^{\left[ n-k\right] }I_{a^+}^{\left( 1-\nu \right) \left( n-\mu \right) \, ;\Psi }h\left( a\right)$\\
where, $h_{\Psi }^{\left[ n-k\right] }h(t)=\left( \frac{1}{\Psi'(t)}\frac{d}{dt}\right)^{n-k}h(t)$.
\item [(ii)]$
^{H}\mathcal{D}_{a^+}^{\mu ,\nu \, ;\Psi }I_{a^+}^{\mu \, ;\Psi }h\left( t\right)
=h\left( t\right).
$
\end{enumerate}
\end{Lemma}
\begin{definition}[\cite{Kilbas, Diethelm1}]
Let $\eta >0\,(\,\eta \in \R) $. {Then the one parameter Mittag-Leffler function is defined as
$$E_{\eta}(z)=\sum_{k=0}^{\infty}\frac{z^k}{\Gamma(k \eta +1)}.$$}
\end{definition}
\begin{Lemma}[\cite{Dhage3}] \label{hyb}
Let  $S$ be a non-empty closed, convex and bounded    subset of  the   Banach algebra $X$  and let $A:X\rightarrow X $ and $B:S\rightarrow X $ be two operators such that
\begin{itemize}[topsep=0pt,itemsep=-1ex,partopsep=1ex,parsep=1ex]
\item [(a)] $A$ is Lipschitzian with a Lipschitz constant $\alpha$;
\item [(b)] $B$ is completely continuous;
\item [(c)] $y=AyBx \implies y\in S~ \text{for all} ~x\in S$ and
\item [(d)] $\alpha M <1 $ where $M=\sup\left\lbrace \left\|Bx \right\|: x\in  S \right\rbrace $.
\end{itemize}
Then the operator equation $y=Ay By$ has a solution in $S$.
\end{Lemma}
\section{Existence of solution}
\begin{Lemma}\label{lem41}
A  function $y\in C_{1-\xi ;\, \Psi }(J,\,\R)$ is the solution of  the Cauchy problem for hybrid FDEs \eqref{eqq1}-\eqref{eqq2} if and only if it is solution of the following hybrid fractional IE
\begin{align}\label{a1}
y(t)&=f(t,y(t))\left\lbrace  \frac{y_{0}}{f(0,y(0))}\left( \Psi \left( t\right) -\Psi \left( 0\right) \right)^{\xi-1 }+\mathcal{I}_{0^+}^{\mu\,;\, \Psi}g(t,y(t))\right\rbrace ,~t\in(0,T].
\end{align}
\end{Lemma}
\begin{proof}
Let  $y\in C_{1-\xi ;\, \Psi }(J,\,\R)$ be the solution of  the Cauchy problem for hybrid FDEs \eqref{eqq1}-\eqref{eqq2}. Operating $\Psi$-RL fractional integral $\mathcal{I}_{0^+}
^{\mu\,;\, \Psi}$ on the both sides of the equation \eqref{eqq1} and  using  Lemma \ref{teo1}(i), we obtain
$$
\frac{y(t)}{f(t,y(t))}-\frac{\left( \Psi \left( t\right) -\Psi \left( 0\right) \right)^{\xi-1 }}{\Gamma(\xi)}\left[\mathcal{I}_{0^+}^{1 -\xi  \, ;\Psi } \frac{y(t)}{f(t,y(t))}\right] _{t=0}=\mathcal{I}_{0^+}^{\mu \, ;\Psi }g(t,y(t)),\,t\in(0,T].
$$
The above equation can be written as
\begin{equation}\label{a2}
y(t)=f(t,y(t))\left\lbrace  \frac{C }{\Gamma(\xi)}\left( \Psi \left( t\right) -\Psi \left( 0\right) \right)^{\xi-1 }+\mathcal{I}_{0^+}^{\mu \, ;\Psi }g(t,y(t))\right\rbrace,~t\in (0, T],
\end{equation}
where
$$
C=\left[ \mathcal{I}_{0^+}^{1 -\xi  \, ;\Psi } \frac{y(t)}{f(t,y(t))}\right] _{t=0}.
$$
Next, we evaluate the value of $C $ using initial condition. Multiplying $\left( \Psi \left( t\right) -\Psi \left( 0\right) \right)^{1-\xi }$ on both sides of the equation \eqref{a2}, we get
$$
\left( \Psi \left( t\right) -\Psi \left( 0\right) \right)^{1-\xi }y(t)=\frac{C\, }{\Gamma(\xi)}f(t,y(t))+\left( \Psi \left( t\right) -\Psi \left( 0\right) \right)^{1-\xi}f(t,y(t))\,\mathcal{I}_{0^+}^{\mu \, ;\Psi }g(t,y(t)),~t\in J.
$$
Putting  $t=0$  in the above equation and using the initial condition \eqref{eqq2}, we obtain
$$
C=\frac{y_0\,\Gamma(\xi)}{f(0,y(0))}.
$$
Putting value of $C$ in the equation \eqref{a2}, we get
$$
y(t)=f(t,y(t))\left[\frac{y_0\,}{f(0,y(0))} \left( \Psi \left( t\right) -\Psi \left( 0\right) \right)^{\xi-1 }+\mathcal{I}_{0^+}^{\mu \, ;\Psi }g(t,y(t))\right],~t\in (0, T],
$$
which is required equivalent hybrid fractional IE to the hybrid FDEs \eqref{eqq1}-\eqref{eqq2}.

Conversely, let $y\in C_{1-\xi ;\, \Psi }(J,\,\R)$ is a solution of  the  hybrid IE \eqref{a1}. 
Then it can be written as
\begin{equation}\label{a3}
\frac{y(t)}{f(t,y(t))}=\frac{y_0\,}{f(0,y(0))} \left( \Psi \left( t\right) -\Psi \left( 0\right) \right)^{\xi-1 }+\mathcal{I}_{0^+}^{\mu \, ;\Psi }g(t,y(t)),~t\in(0, T].
\end{equation}
 Operating the $\Psi$-Hilfer derivative  $ ^H \mathcal{D}^{\mu,\,\nu\,;\, \Psi}_{0^+}$ on both sides of the equation \eqref{a3}  and using the Lemma \ref{lema2}(iii) and Lemma \ref{teo1}(ii),  we obtain
 \begin{align*}
 ^H \mathcal{D}^{\mu,\,\nu\,;\, \Psi}_{0^+}\left[ \frac{y(t)}{f(t,y(t))}\right] &=\frac{y_0\,}{f(0,y(0))}\,^H \mathcal{D}^{\mu,\,\nu\,;\, \Psi}_{0^+}\left( \Psi \left( t\right) -\Psi \left( 0\right) \right)^{\xi-1 } +\,^H \mathcal{D}^{\mu,\,\nu\,;\, \Psi}_{0^+}\mathcal{I}_{0^+}^{\mu \, ;\Psi }g(t,y(t))\\
 &=g(t,y(t)).
 \end{align*}
  It remains to verify the initial condition \eqref{eqq2}. From the equation \eqref{a3}, we have
 $$
\left( \Psi \left( t\right) -\Psi \left( 0\right) \right)^{1-\xi }\frac{y(t)}{f(t,y(t))}=\frac{y_0}{f(0,y(0))} +\left( \Psi \left( t\right) -\Psi \left( 0\right) \right)^{1-\xi }\mathcal{I}_{0^+}^{\mu \, ;\Psi }g(t,y(t)).
 $$
 At $t=0$, above equation reduces to   
 $$
 \left[\left( \Psi \left( t\right) -\Psi \left( 0\right) \right)^{1-\xi }\,y(t)\right]_{t=0}=y_0,
 $$
which gives the initial condition \eqref{eqq2}. This completes the proof. 
 \end{proof}
 
 To prove the existence of solution  to the hybrid FDEs \eqref{eqq1}-\eqref{eqq2}, we need the following hypotheses  on $f$ and $g$:
  \begin{enumerate}[topsep=0pt,itemsep=-1ex,partopsep=1ex,parsep=1ex]
  \item [(H1)] The function $f\in C\left(J\times\R, \R\setminus\{0\} \right) $ is bounded and satisfies the following conditions:
  \begin{enumerate}
  \item  [(i)]The mapping $v \rightarrow \frac{v}{f(t,v)}$ is increasing in $\R$ a.e.  $t\in (0, T]$ ;
  \item [(ii)] there exists $L>0$ such that
   $$
    \left| f(t,x)-f(t,y)\right| \leq L \left| x-y\right|,~  t\in J~\text{and}\,\, x,y\in \R.
    $$
  \end{enumerate}  
  \item [(H2)] The function $ g\in \mathfrak{C}(J \times \R  \,, \R)$  and there exists a function $h\in C(J,\,\R)$ such that
   $$
    \left| g(t,y)\right| \leq h(t),\,~a.e.\,\, t\in J~ \text{and}~  \, y\in \R.
   $$
   \end{enumerate}
 
 \title{Existence Theorem}
 \begin{theorem}\label{tha3.2}
 Assume that the  hypotheses {\normalfont(H1)-(H2)} hold. Then the hybrid FDEs  \eqref{eqq1}-\eqref{eqq2} has a solution $y\in C_{1-\xi ;\, \Psi }(J,\,\R)$ provided
\begin{equation}\label{aa1}
L\left\lbrace \left| \frac{y_0\,}{f(0,y(0))}\right| + \frac{\left\| h\right\|_\infty \left( \Psi \left( T\right) -\Psi \left( 0\right) \right)^{\mu }  }{\Gamma(\mu+1)}\right\rbrace <1.
\end{equation}
 \end{theorem}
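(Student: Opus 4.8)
The plan is to recast the equivalent integral equation \eqref{a1} as a hybrid operator equation $y=AyBy$ and to invoke the Dhage hybrid fixed point result, Lemma~\ref{hyb}. Working in $X=C_{1-\xi;\,\Psi}(J,\R)$, I define
\[
(Ay)(t)=f\bigl(t,y(t)\bigr),\qquad
(By)(t)=\frac{y_0}{f(0,y(0))}\bigl(\Psi(t)-\Psi(0)\bigr)^{\xi-1}+\mathcal{I}_{0^+}^{\mu\,;\,\Psi}g\bigl(t,y(t)\bigr),
\]
so that a fixed point of $y\mapsto AyBy$ is exactly a solution of \eqref{a1}, hence by Lemma~\ref{lem41} of \eqref{eqq1}--\eqref{eqq2}. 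Since $f$ is bounded, $Ay$ lies in the bounded continuous functions $C(J,\R)$, and the product of such a function with an element of $X$ is again in $X$; this is what lets the Banach-algebra machinery of Lemma~\ref{hyb} act on these particular operators. The closed, convex, bounded set on which the lemma is applied will be a ball $S=\{y\in X:\|y\|_{C_{1-\xi;\Psi}}\le N\}$, whose radius $N$ is fixed while checking condition (c).

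Conditions (a), (c) and (d) are the routine part. For (a), hypothesis (H1)(ii) gives, after multiplying by the weight $(\Psi(t)-\Psi(0))^{1-\xi}$ and maximizing over $t$,
\[
\|Ay_1-Ay_2\|_{C_{1-\xi;\Psi}}\le L\,\|y_1-y_2\|_{C_{1-\xi;\Psi}},
\]
so $A$ is Lipschitzian with $\alpha=L$. Using $|g(t,y)|\le h(t)$ from (H2) together with Lemma~\ref{lema2}(ii) (with $\delta=1$) one bounds $|\mathcal{I}_{0^+}^{\mu;\Psi}g(t,y(t))|\le \|h\|_\infty(\Psi(t)-\Psi(0))^\mu/\Gamma(\mu+1)$, and the weight annihilates the $(\Psi(t)-\Psi(0))^{\xi-1}$ singularity of the data term; this produces a uniform bound $M=\sup_{x\in S}\|Bx\|_{C_{1-\xi;\Psi}}$ whose product with $L$ is governed by the bracketed quantity in \eqref{aa1}, so that the smallness hypothesis \eqref{aa1} is precisely the statement $\alpha M=LM<1$ demanded in (d). For (c), if $y=AyBx$ with $x\in S$, then $|y(t)|\le\|f\|_\infty|Bx(t)|$ pointwise, whence $\|y\|_{C_{1-\xi;\Psi}}\le\|f\|_\infty M=:N$; this fixes the radius of $S$ and shows $y\in S$.

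The substantive step is condition (b): $B$ is completely continuous on $S$. Continuity follows from the continuity of $g$ in its second argument required in (H2): if $x_n\to x$ in $X$ then $g(t,x_n(t))\to g(t,x(t))$ pointwise on $(0,T]$, and since $|g|\le h$ the dominated convergence theorem passes the limit through $\mathcal{I}_{0^+}^{\mu;\Psi}$, giving $\|Bx_n-Bx\|_{C_{1-\xi;\Psi}}\to0$. Uniform boundedness of $B(S)$ is the bound $M$ already obtained. For relative compactness I would apply the Arzel\`a--Ascoli theorem to the rescaled family $\{t\mapsto(\Psi(t)-\Psi(0))^{1-\xi}(Bx)(t):x\in S\}\subset C[0,T]$: the constant data term is harmless, and for the integral term I estimate, for $0\le t_1<t_2\le T$,
\[
\Bigl|(\Psi(t_2)-\Psi(0))^{1-\xi}\mathcal{I}_{0^+}^{\mu;\Psi}g(t_2)-(\Psi(t_1)-\Psi(0))^{1-\xi}\mathcal{I}_{0^+}^{\mu;\Psi}g(t_1)\Bigr|,
\]
bounding $|g|\le\|h\|_\infty$ and splitting the kernel in the standard way, to show this tends to $0$ as $t_2-t_1\to0$ uniformly in $x\in S$. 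I expect this equicontinuity estimate, and in particular its behaviour as $t_1\to0^+$ where the weight $(\Psi(t)-\Psi(0))^{1-\xi}$ interacts with the singular kernel of $\mathcal{I}_{0^+}^{\mu;\Psi}$, to be the main obstacle; it is the one place where genuine fractional-integral estimates rather than bookkeeping are needed.

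With (a)--(d) verified, Lemma~\ref{hyb} yields a solution $y\in S$ of $y=AyBy$, i.e. of the integral equation \eqref{a1}; by the equivalence established in Lemma~\ref{lem41}, this $y\in C_{1-\xi;\Psi}(J,\R)$ solves the hybrid problem \eqref{eqq1}--\eqref{eqq2}, which proves the theorem. The only point requiring attention beyond the four conditions is that the operator product $AyBy$ remains in $X$; as noted, this is guaranteed by $A$ taking values in the bounded continuous functions, so no closure of the weighted space under arbitrary products is invoked.
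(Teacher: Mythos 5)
Your proposal follows essentially the same route as the paper: the same operators $A$ and $B$, the same ball $S$ of radius $\|f\|_\infty M$, verification of conditions (a)--(d) of Lemma \ref{hyb} in the same way (Lipschitz constant $L$ for $A$ from (H1)(ii), complete continuity of $B$ via dominated convergence and Arzel\`a--Ascoli on the weighted family, the invariance condition from boundedness of $f$, and $\alpha M<1$ from \eqref{aa1}), followed by Lemma \ref{lem41} to pass back to \eqref{eqq1}--\eqref{eqq2}. The only substantive remark is that the bound on $M$ you (and the paper) actually derive carries the exponent $\mu+1-\xi$ rather than the exponent $\mu$ appearing in \eqref{aa1}, so the identification of $LM<1$ with \eqref{aa1} is literal only when $\xi=1$ or $\Psi(T)-\Psi(0)\le 1$; this imprecision is inherited from the paper itself and does not distinguish your argument from it.
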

   \begin{proof}
   Let $X:=\left( C_{1-\xi ;\, \Psi }(J,\,\R), \,\left\Vert \cdot\right\Vert _{C_{1-\xi ;\,\Psi }\left( J,\,\R\right) }\right) $.  Then $X$ is a Banach algebra with the product of vectors defined by $(xy)(t)=x(t)y(t),\,t\in J$. Define,
   $$
   S=\{x\in X: \left\Vert x\right\Vert _{C_{1-\xi ;\,\Psi }\left(J,\,\R\right)  }\leq R\},
   $$
   where 
   $$
   R=K\left\lbrace  \left| \frac{y_0\,}{f(0,y(0))}\right| + \frac{\left\| h\right\|_\infty \left( \Psi \left( T\right) -\Psi \left( 0\right) \right)^{\mu+1-\xi }  }{\Gamma(\mu+1)}\right\rbrace 
   $$
   and $K$ is bound on $f$.
   Clearly, $S$ is closed, convex and bounded subset of $X$.
   Define two operators  $A:X\rightarrow X $ and $B:S\rightarrow X $ by
   \begin{align*}
  Ay(t)&= f(t,y(t)), \,t\in J,\\
  By(t)&= \frac{y_0\,\left( \Psi \left( t\right) -\Psi \left( 0\right) \right)^{\xi-1 }}{f(0,y(0))} +\frac{1}{\Gamma \left( \mu \right) }\int_{0}^{t}\Psi'(s)(\Psi(t)-\Psi(s))^{\mu-1} g(s,y(s))\,ds, \, t\in (0,T].
\end{align*}
        Then, the hybrid IE \eqref{a1} is transformed into the following operator equation 
      $$
      y= Ay By,\, y\in X.
      $$
     We prove that the operators $A$ and $B$ satisfies all the conditions of Lemma \ref{hyb}. The proof is given in the following steps:\\
\textbf{Step 1:} $A:X\rightarrow X$  is Lipschitz operator.\\ 
Using the hypothesis (H1)(ii), we obtain
\begin{align*}
\left| \left( \Psi \left( t\right) -\Psi \left( 0\right) \right)^{1-\xi} \left(  Ax(t)-Ay(t) \right)   \right|
 &=\left| \left( \Psi \left( t\right) -\Psi \left( 0\right) \right)^{1-\xi }\left(  f(t,x(t))-f(t,y(t)) \right) \right| \\
 &\leq L\left|\left( \Psi \left( t\right) -\Psi \left( 0\right) \right)^{1-\xi } \left( x(t)-y(t)\right) \right|\\
 &\leq L \left\Vert x-y\right\Vert _{C_{1-\xi ;\,\Psi }\left(J,\,\R\right)  }.
\end{align*}
This gives,
\begin{equation}\label{A1}
\left\Vert Ax-Ay\right\Vert _{C_{1-\xi ;\,\Psi }\left(J,\,\R\right)  }\leq L \left\Vert x-y\right\Vert _{C_{1-\xi ;\,\Psi }\left(J,\,\R\right)  }. 
\end{equation}
\textbf{Step 2:} $B:S\rightarrow X$  is completely continuous.\\ 
(i) $B:S\rightarrow X$ is continuous.\\ 
Let  $y_n \rightarrow y$  in $S$. Then,
\begin{align*}
&\left\Vert By_n-By\right\Vert _{C_{1-\xi ;\,\Psi }\left(J,\,\R\right)  }\nonumber\\
&=\underset{t\in J 
}{\max }\left\vert \left( \Psi \left( t\right) -\Psi \left( 0\right) \right)
^{1-\xi }\left(   By_n(t)-By(t)\right)   \right\vert\nonumber\\
&\leq \underset{t\in J }{\max }\frac{\left( \Psi \left( t\right) -\Psi \left( 0\right) \right)
^{1-\xi }}{\Gamma \left( \mu\right) } \int_{0}^{t}\Psi'(s)(\Psi(t)-\Psi(s))^{\mu-1}\left\vert g(s,y_n(s))-g(s,y(s))\right\vert\,ds.
\end{align*}
By continuity of $g$ and Lebesgue dominated convergence theorem, from the above inequality,  we have
$$
\left\Vert By_n-By\right\Vert _{C_{1-\xi ;\,\Psi }\left(J,\,\R\right)  }\rightarrow 0 ~\text{ as}~ n\rightarrow\infty.
$$
This proves $B:S\rightarrow X$ is continuous.\\ 
(ii) $B(S)=\left\lbrace By: y\in S\right\rbrace $ is uniformly bounded.\\
Using hypothesis (H2), for any $y\in S$ and $t\in J$, we have
\begin{align*}
&\left\vert \left( \Psi \left( t\right) -\Psi \left( 0\right) \right)
^{1-\xi }By(t) \right\vert\\
 &\leq \left\vert \frac{y_0}{f(0,y(0))}\right\vert +\frac{\,\left( \Psi \left( t\right) -\Psi \left( 0\right) \right)^{1-\xi }}{\Gamma \left( \mu
       \right) }\int_{0}^{t}\Psi'(s)(\Psi(t)-\Psi(s))^{\mu-1} \left\vert g(s,y(s))\right\vert \,ds\\
&\leq \left\vert \frac{y_0}{f(0,y(0))}\right\vert +\frac{\,\left( \Psi \left( t\right) -\Psi \left( 0\right) \right)^{1-\xi }}{\Gamma \left( \mu
       \right) }\int_{0}^{t}\Psi'(s)(\Psi(t)-\Psi(s))^{\mu-1} h(s)\,ds\\
&\leq \left\vert \frac{y_0}{f(0,y(0))}\right\vert +\left\| h\right\|_\infty\,\left( \Psi \left( t\right) -\Psi \left( 0\right) \right)^{1-\xi }\frac{\left( \Psi \left( t\right) -\Psi \left( 0\right) \right)^{\mu }}{\Gamma \left( \mu+1     \right) }   \\    
&\leq \left\vert \frac{y_0}{f(0,y(0))}\right\vert +\frac{\left\| h\right\|_\infty\left( \Psi \left( T\right) -\Psi \left( 0\right) \right)^{\mu+1-\xi }}{\Gamma \left( \mu+1     \right) }. 
\end{align*}
Therefore, 
\begin{equation}\label{A2}
\left\Vert By\right\Vert _{C_{1-\xi ;\,\Psi }\left(J,\,\R\right)  }\leq \left\vert \frac{y_0}{f(0,y(0))}\right\vert +\frac{\left\| h\right\|_\infty\left( \Psi \left( T\right) -\Psi \left( 0\right) \right)^{\mu+1-\xi }}{\Gamma \left( \mu+1   \right) }.
\end{equation}
(iii) $B(S)$ is equicontinuous. \\
Let any $y\in S$ and $t_1, t_2\in J $ with $t_1<t_2$. Then using hypothesis (H2), we have
\begin{align*}
&\left\vert \left( \Psi \left( t_2\right) -\Psi \left( 0\right) \right)
^{1-\xi }By(t_2)-\left( \Psi \left( t_1\right) -\Psi \left( 0\right) \right) ^{1-\xi }By(t_1) \right\vert\\
&=\left\vert\left\lbrace \frac{y_0}{f(0,y(0))} +\frac{\left( \Psi \left( t_2\right) -\Psi \left( 0\right) \right)
^{1-\xi } }{\Gamma \left( \mu\right) }\int_{0}^{t_2}\Psi'(s)(\Psi(t_2)-\Psi(s))^{\mu-1} g(s,y(s))\,ds\right\rbrace\right.\\
&\left.-\left\lbrace \frac{y_0}{f(0,y(0))} +\frac{\left( \Psi \left( t_1\right) -\Psi \left( 0\right) \right)
^{1-\xi }}{\Gamma \left( \mu\right) }\int_{0}^{t_1}\Psi'(s)(\Psi(t_1)-\Psi(s))^{\mu-1} g(s,y(s))\,ds\right\rbrace\right\vert\\
&\leq 
\left\vert   \frac{\left( \Psi \left( t_2\right) -\Psi \left( 0\right) \right)
^{1-\xi } }{\Gamma \left( \mu\right) }\int_{0}^{t_2}\Psi'(s)(\Psi(t_2)-\Psi(s))^{\mu-1} \left| g(s,y(s))\right| \,ds\right.\\
&\left.\qquad-\frac{\left( \Psi \left( t_1\right) -\Psi \left( 0\right) \right)
^{1-\xi } }{\Gamma \left( \mu\right) }\int_{0}^{t_1}\Psi'(s)(\Psi(t_1)-\Psi(s))^{\mu-1} \left| g(s,y(s))\right|\,ds\right\vert\\
&\leq 
\left\vert   \frac{\left( \Psi \left( t_2\right) -\Psi \left( 0\right) \right)
^{1-\xi } }{\Gamma \left( \mu\right) }\int_{0}^{t_2}\Psi'(s)(\Psi(t_2)-\Psi(s))^{\mu-1} h(s)\,ds\right.\\
&\left.\qquad-\frac{\left( \Psi \left( t_1\right) -\Psi \left( 0\right) \right)
^{1-\xi } }{\Gamma \left( \mu\right) }\int_{0}^{t_1}\Psi'(s)(\Psi(t_1)-\Psi(s))^{\mu-1}h(s)\,ds\right\vert\\
&\leq 
\left\vert   \frac{\left( \Psi \left( t_2\right) -\Psi \left( 0\right) \right)
^{1-\xi } \left\| h\right\|_\infty}{\Gamma \left( \mu\right) }\int_{0}^{t_2}\Psi'(s)(\Psi(t_2)-\Psi(s))^{\mu-1} \,ds\right.\\
&\left.\qquad-\frac{\left( \Psi \left( t_1\right) -\Psi \left( 0\right) \right)
^{1-\xi }\left\| h\right\|_\infty }{\Gamma \left( \mu\right) }\int_{0}^{t_1}\Psi'(s)(\Psi(t_1)-\Psi(s))^{\mu-1}\,ds\right\vert\\
&= 
  \frac{\left\| h\right\|_\infty}{\Gamma \left( \mu+1\right) }\left\lbrace  (\Psi(t_2)-\Psi(0))^{\mu+1-\xi} - (\Psi(t_1)-\Psi(0))^{\mu+1-\xi} \right\rbrace.
 \end{align*}
By the  continuity of  $\Psi$, from the above inequality it follows that 
$$
\text{if}\left|t_1-t_2 \right|\to 0 ~\text{then}\left| \left( \Psi \left( t_2\right) -\Psi \left( 0\right) \right)
^{1-\xi }By(t_2)-\left( \Psi \left( t_1\right) -\Psi \left( 0\right) \right) ^{1-\xi }By(t_1)\right|\to 0.
$$ 
From the parts (ii) and (iii), it follows that $B(S)$ is uniformly bounded and  equicontinous set in $X$. Then by Arzel$\grave{a}$-Ascoli theorem, $B(S)$ is relatively compact. Therefore, $B:S\rightarrow X$ is a compact operator. Since $B:S\rightarrow X$ is the continuous and compact operator, it is completely continuous.\\
\textbf{Step 3:} For $y\in X,$~$y=AyBx \implies y\in S~ \text{for all} ~x\in S$.\\ 
Let any $y\in X$ and $x\in S$   such that $y=AyBx$. Using the hypothesis (H2) and boundedness of $f$, for any $t\in J$, we have
\begin{align*}
&\left\vert \left( \Psi \left( t\right) -\Psi \left( 0\right) \right)
^{1-\xi }y(t) \right\vert\\
&=\left\vert \left( \Psi \left( t\right) -\Psi \left( 0\right) \right)
^{1-\xi }Ay(t)Bx(t) \right\vert\\
&=\left\vert \left( \Psi \left( t\right) -\Psi \left( 0\right) \right)
^{1-\xi }f(t,y(t))\left\lbrace  \frac{y_{0}\,\left( \Psi \left( t\right) -\Psi \left( 0\right) \right)^{\xi-1 }}{f(0,x(0))}+\frac{1}{\Gamma \left( \mu
\right) }\int_{0}^{t}\Psi'(s)(\Psi(t)-\Psi(s))^{\mu-1} g(s,x(s))\,ds\right\rbrace \right\vert\\
&\leq\left\vert f(t,y(t))\right\vert \left\lbrace \left\vert  \frac{y_{0}}{f(0,x(0))}\right\vert+\frac{ \left( \Psi \left( t\right) -\Psi \left(0\right) \right)
^{1-\xi }}{\Gamma \left( \mu
\right) }\int_{0}^{t}\Psi'(s)(\Psi(t)-\Psi(s))^{\mu-1} \left\vert g(s,x(s))\right\vert\,ds\right\rbrace \\
&\leq K \left\lbrace \left\vert  \frac{y_{0}}{f(0,x(0))}\right\vert+\frac{ \left( \Psi \left( t\right) -\Psi \left( 0\right) \right)
^{1-\xi }}{\Gamma \left( \mu
\right) }\int_{0}^{t}\Psi'(s)(\Psi(t)-\Psi(s))^{\mu-1} h(s)\,ds\right\rbrace \\
&\leq K \left\lbrace \left\vert  \frac{y_{0}}{f(0,x(0))}\right\vert+\frac{ \left( \Psi \left( t\right) -\Psi \left( 0\right) \right)
^{\mu+1-\xi }\left\| h\right\|_\infty}{\Gamma \left( \mu+1
\right) }\right\rbrace . 
\end{align*}
This gives
$$
\left\Vert y\right\Vert _{C_{1-\xi ;\,\Psi }\left(J,\,\R\right)  }\leq K \left\lbrace \left\vert  \frac{y_{0}}{f(0,x(0))}\right\vert+\frac{ \left( \Psi \left(T\right) -\Psi \left( 0\right) \right)
^{\mu+1-\xi }\left\| h\right\|_\infty}{\Gamma \left( \mu+1
\right) }\right\rbrace=R.  
$$
This implies, $y\in S$.
\\
\textbf{Step 4:} To prove $\alpha M<1$ where $M=\sup\left\lbrace \left\|By \right\|_{C_{1-\xi ;\,\Psi }\left(J,\,\R\right)}: y\in  S \right\rbrace $.\\
From inequality \eqref{A2}, we have
\begin{align*}
M=\sup\left\lbrace \left\|By \right\|_{C_{1-\xi ;\,\Psi }\left(J,\,\R\right)}: y\in  S \right\rbrace
\leq  \left\vert \frac{y_0}{f(0,y(0))}\right\vert +\frac{\left\| h\right\|_\infty\left( \Psi \left( T\right) -\Psi \left( 0\right) \right)^{\mu+1-\xi }}{\Gamma \left( \mu+1     \right) } .
\end{align*}
From the inequality \eqref{A1}, we have $\alpha=L$. Therefore, using the condition \eqref{aa1}, we have 
$$
\alpha M\leq L  \left\lbrace \left\vert \frac{y_0}{f(0,y(0))}\right\vert +\frac{\left\| h\right\|_\infty\left( \Psi \left( T\right) -\Psi \left( 0\right) \right)^{\mu+1-\xi }}{\Gamma \left( \mu+1     \right) }\right\rbrace <1.
$$ 
From steps 1 to 4, it follows that all the conditions of Lemma \ref{hyb} are fulfilled. Consequently,  applying Lemma \ref{hyb}, the operator equation $y=AyBy$ has a solution in  $S$, which acts as a solution of hybrid FDEs \eqref{eqq1}-\eqref{eqq2}.
\end{proof}

\section{Estimates on $\Psi$-Hilfer derivative  }
\begin{theorem}\label{Lem41}
Let $m\in C_{1-\xi ;\, \Psi }(J,\,\R)$. Let   $t_1\in(0, T]$ be such that  $m(t_1)=0$ and $m(t)\leq 0,\,t\in (0, t_1)$.  Then,\,$^H \mathcal{D}^{\mu,\,\nu\,;\, \Psi}_{0^+}m(t_1)\geq 0$. 
\end{theorem}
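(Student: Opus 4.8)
The plan is to split the $\Psi$-Hilfer derivative at $t_1$ into a $\Psi$-Riemann--Liouville derivative of order $\mu$ plus a single explicit boundary term, and then to check that each summand is nonnegative under the hypotheses $m(t_1)=0$ and $m(t)\le 0$ on $(0,t_1)$. Set $\alpha=\nu(1-\mu)$ and $\beta=(1-\nu)(1-\mu)$, so that $\alpha+\beta=1-\mu$ and $\beta=1-\xi$. By definition, ${}^H\mathcal{D}^{\mu,\,\nu\,;\,\Psi}_{0^+}m=\mathcal{I}^{\alpha\,;\Psi}_{0^+}\big(\tfrac{1}{\Psi'(t)}\tfrac{d}{dt}\big)\mathcal{I}^{\beta\,;\Psi}_{0^+}m$.

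First I would record the commutation identity
\begin{equation*}
\mathcal{I}^{\alpha\,;\Psi}_{0^+}\Big(\tfrac{1}{\Psi'(t)}\tfrac{d}{dt}\Big)\phi(t)=\Big(\tfrac{1}{\Psi'(t)}\tfrac{d}{dt}\Big)\mathcal{I}^{\alpha\,;\Psi}_{0^+}\phi(t)-\frac{\left(\Psi(t)-\Psi(0)\right)^{\alpha-1}}{\Gamma(\alpha)}\phi(0),
\end{equation*}
which is immediate once the change of variable $\tau=\Psi(t)$ turns the $\Psi$-integrals into ordinary Riemann--Liouville integrals. Applying it with $\phi=\mathcal{I}^{\beta\,;\Psi}_{0^+}m$, using the semigroup law $\mathcal{I}^{\alpha\,;\Psi}_{0^+}\mathcal{I}^{\beta\,;\Psi}_{0^+}=\mathcal{I}^{1-\mu\,;\Psi}_{0^+}$ from Lemma \ref{lema2}(i), and noting $\big(\tfrac{1}{\Psi'}\tfrac{d}{dt}\big)\mathcal{I}^{1-\mu\,;\Psi}_{0^+}m={}^{RL}\mathcal{D}^{\mu\,;\Psi}_{0^+}m$, gives
\begin{equation*}
{}^H\mathcal{D}^{\mu,\,\nu\,;\,\Psi}_{0^+}m(t)={}^{RL}\mathcal{D}^{\mu\,;\Psi}_{0^+}m(t)-\frac{\left(\Psi(t)-\Psi(0)\right)^{\alpha-1}}{\Gamma(\alpha)}\,\mathcal{I}^{\beta\,;\Psi}_{0^+}m(0^+).
\end{equation*}

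Next I would evaluate the boundary value. Since $m\in C_{1-\xi\,;\Psi}(J,\R)$, the limit $m^{*}:=[(\Psi(t)-\Psi(0))^{1-\xi}m(t)]_{t=0}$ exists; writing $m(t)=m^{*}(\Psi(t)-\Psi(0))^{\xi-1}+o\big((\Psi(t)-\Psi(0))^{\xi-1}\big)$ near $0$ and applying Lemma \ref{lema2}(ii) with $\delta=\xi$ (so $\xi+\beta=1$) yields $\mathcal{I}^{\beta\,;\Psi}_{0^+}m(0^+)=\Gamma(\xi)\,m^{*}$. Hence, at $t_1$,
\begin{equation*}
{}^H\mathcal{D}^{\mu,\,\nu\,;\,\Psi}_{0^+}m(t_1)={}^{RL}\mathcal{D}^{\mu\,;\Psi}_{0^+}m(t_1)-\frac{\Gamma(\xi)}{\Gamma(\alpha)}\left(\Psi(t_1)-\Psi(0)\right)^{\alpha-1}m^{*}.
\end{equation*}
Because $m(t)\le0$ on $(0,t_1)$ forces $m^{*}\le0$, while $\Gamma(\xi),\Gamma(\alpha)>0$ and $(\Psi(t_1)-\Psi(0))^{\alpha-1}>0$, the boundary term is $\ge0$ (and it drops out when $\nu=0$, since then $\alpha=0$ and $1/\Gamma(\alpha)=0$).

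It therefore remains to show ${}^{RL}\mathcal{D}^{\mu\,;\Psi}_{0^+}m(t_1)\ge0$, which is the classical Riemann--Liouville extremum principle transported through $\tau=\Psi(t)$. Setting $M=m\circ\Psi^{-1}$, so that $M(\Psi(t_1))=0$ and $M\le0$ to the left, an integration by parts in the convolution defining ${}^{RL}D^{\mu}M$ (the Marchaud form, whose boundary contribution vanishes because $M=0$ at the upper endpoint) yields
\begin{equation*}
{}^{RL}\mathcal{D}^{\mu\,;\Psi}_{0^+}m(t_1)=\frac{\mu}{\Gamma(1-\mu)}\int_{0}^{t_1}\frac{-\,m(s)}{\left(\Psi(t_1)-\Psi(s)\right)^{\mu+1}}\,\Psi'(s)\,ds\ge0,
\end{equation*}
the integrand being nonnegative. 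I expect the main obstacle to lie precisely in justifying this last representation: differentiating under the singular kernel at the upper limit $t_1$ and controlling the weighted singularity of $m$ at $0$ require $m$ to be H\"older continuous of order exceeding $\mu$ near $t_1$ (while $\xi>0$ already guarantees convergence at $0$). Once the representation is secured the sign is automatic, so the entire delicate analytic point is local at $t_1$, exactly where the existence of ${}^H\mathcal{D}^{\mu,\,\nu\,;\,\Psi}_{0^+}m(t_1)$ is being invoked.
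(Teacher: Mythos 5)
Your route is genuinely different from the paper's, and it is sound up to the single regularity caveat you yourself flag. The paper never splits the Hilfer derivative: it works directly with $M_\Psi(t)=\int_0^t\Psi'(s)(\Psi(t)-\Psi(s))^{-\xi}m(s)\,ds$, i.e.\ with $\Gamma(1-\xi)\,\mathcal{I}_{0^+}^{(1-\nu)(1-\mu);\Psi}m$, splits the difference quotient $\frac{1}{h}\left(M_\Psi(t_1)-M_\Psi(t_1-h)\right)$ into a piece over $(0,t_1-h)$ (nonnegative because the kernel difference and $m$ are both nonpositive there) and a piece over $(t_1-h,t_1)$ (bounded below by $O(h)$ using $m(t_1)=0$), lets $h\to0$ to get $\big[\frac{1}{\Psi'}\frac{d}{dt}\mathcal{I}_{0^+}^{(1-\nu)(1-\mu);\Psi}m\big]_{t=t_1}\ge0$, and then simply asserts that applying the outer integral $\mathcal{I}_{0^+}^{\nu(1-\mu);\Psi}$ preserves the sign at $t_1$ --- a step that, for $\nu>0$, would require the inner expression to be nonnegative on all of $(0,t_1)$, not just at the endpoint. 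Your commutation identity replaces that leap by an exact pointwise identity: the outer integral is absorbed into ${}^{RL}\mathcal{D}^{\mu;\Psi}_{0^+}m$ plus one explicit boundary term whose sign is controlled by $m^{*}\le0$. That is the cleanest part of your argument and is an improvement over the paper.

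The genuine gap is the one you name. The Marchaud representation of ${}^{RL}\mathcal{D}^{\mu;\Psi}_{0^+}m(t_1)$ --- equivalently, the vanishing of the boundary contribution at the upper endpoint and the interchange of $d/dt$ with the singular integral --- needs $|m(s)|\le C\left(\Psi(t_1)-\Psi(s)\right)^{\lambda}$ with $\lambda>\mu$ near $t_1$, and $m\in C_{1-\xi;\Psi}(J,\R)$ with $m(t_1)=0$ does not supply this: for $m(s)=-\left(\Psi(t_1)-\Psi(s)\right)^{\mu/2}$ near $t_1$ your final integral diverges. You should note, however, that the paper's proof hides exactly the same defect: it deduces a two-sided Lipschitz bound on $(\Psi(t)-\Psi(0))^{1-\xi}m(t)$ near $t_1$ ``by continuity,'' which is false for mere continuity and is an even stronger local regularity demand than yours. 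So your obstacle is real but puts you no worse off than the paper; closing it in either argument requires adding a local H\"older (or Lipschitz) hypothesis at $t_1$, which is harmless in the subsequent applications where $m$ is built from solutions of the integral equation.
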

\begin{proof}
Define
$$
M_\Psi (t)=\int_{0}^{t} \Psi'(s)\left( \Psi(t)-\Psi(s)\right)^{-\xi} m(s) ds,~t\in J.
$$
Let any $h>0$ such that $0<t_1-h<t_1$. Then
\begin{align*}
&M_\Psi (t_1)-M_\Psi (t_1-h)\nonumber\\
&=\int_{0}^{t_1} \Psi'(s)\left( \Psi(t_1)-\Psi(s)\right)^{-\xi} m(s) ds-\int_{0}^{t_1-h} \Psi'(s)\left( \Psi(t_1-h)-\Psi(s)\right)^{-\xi} m(s) ds\nonumber\\
&=\int_{0}^{t_1-h} \Psi'(s)\left\lbrace  \left( \Psi(t_1)-\Psi(s)\right)^{-\xi} -\left( \Psi(t_1-h)-\Psi(s)\right)^{-\xi}\right\rbrace   m(s) ds\nonumber\\
&\quad+\int_{t_1-h}^{t_1} \Psi'(s)\left( \Psi(t_1)-\Psi(s)\right)^{-\xi} m(s) ds.\nonumber
\end{align*}
Let 
\begin{align*}
&I_1=\int_{0}^{t_1-h} \Psi'(s)\left\lbrace  \left( \Psi(t_1)-\Psi(s)\right)^{-\xi} -\left( \Psi(t_1-h)-\Psi(s)\right)^{-\xi}\right\rbrace   m(s) ds\\
\end{align*}
and 
\begin{align*}
& I_2=\int_{t_1-h}^{t_1} \Psi'(s)\left( \Psi(t_1)-\Psi(s)\right)^{-\xi} m(s) ds.
\end{align*}
Then
\begin{equation}\label{A11}
M_\Psi (t_1)-M_\Psi (t_1-h)=I_1+I_2.
\end{equation}
Since $\xi>0$ and $\Psi$ is increasing function, we have
$$
0<\left( \Psi(t_1-h)-\Psi(s)\right)^\xi< \left( \Psi(t_1)-\Psi(s)\right)^\xi  ,\, \text{for}~ 0< s< t_1-h<t_1. 
$$
This gives
$$
 \left( \Psi(t_1)-\Psi(s)\right)^{-\xi} -\left( \Psi(t_1-h)-\Psi(s)\right)^{-\xi}<0,\, \text{for}~ 0< s< t_1-h<t_1. 
$$
This coupled with hypothesis $m(s)\leq 0, s\in(0, t_1)$  and the fact  $\Psi'(s)>0,\, s\in J,$ we have
$$
 \Psi'(s)\left\lbrace  \left( \Psi(t_1)-\Psi(s)\right)^{-\xi} -\left( \Psi(t_1-h)-\Psi(s)\right)^{-\xi}\right\rbrace m(s) \geq0,\,\text{for}~ 0< s< t_1-h<t_1. 
$$
This implies
 $I_1\geq 0$. Therefore, the equation \eqref{A11} reduces to
\begin{equation}\label{A13}
M_\Psi (t_1)-M_\Psi (t_1-h)\geq I_2.
\end{equation}
Since $\left( \Psi(t)-\Psi(0)\right)^{1-\xi}m(t)$ is continuous on $J$, corresponding to $t_1$, there exists constant $K(t_1)>0$   such that for $0<t_1-h<s<t_1,$
$$
-K(t_1)(t_1-s)\leq \left( \Psi(t_1)-\Psi(0)\right)^{1-\xi}m(t_1)-\left( \Psi(s)-\Psi(0)\right)^{1-\xi}m(s)\leq K(t_1)(t_1-s).
$$
But by hypothesis $m(t_1)=0$. Therefore, for $0<t_1-h<s<t_1$, we have
$$
-K(t_1)(t_1-s)\leq -\left( \Psi(s)-\Psi(0)\right)^{1-\xi}m(s)\leq K(t_1)(t_1-s).
$$
This gives
\begin{equation}\label{A14}
\left( \Psi(s)-\Psi(0)\right)^{1-\xi}m(s)\geq -K(t_1)(t_1-s),~0<t_1-h< s< t_1.
\end{equation}
Since 
\begin{equation}\label{A15}
 -K(t_1)(t_1-s)>  -K(t_1)h.
\end{equation}
From inequalities \eqref{A14} and \eqref{A15},  we have 
$$
\left( \Psi(s)-\Psi(0)\right)^{1-\xi}m(s)> -K(t_1)h, \,  \,\,0<t_1-h<s<t_1.
$$ 
This implies
\begin{equation}\label{Am15}
\Psi'(s)\left( \Psi(t_1)-\Psi(s)\right)^{-\xi}m(s)\geq -h K(t_1)\Psi'(s)\left( \Psi(t_1)-\Psi(s)\right)^{-\xi}\left( \Psi(s)-\Psi(0)\right)^{\xi-1},
\end{equation} for $ 0<t_1-h<s<t_1$.
By increasing nature of $\Psi$, we have
$$\Psi(t_1-h)<\Psi(s)<\Psi(t_1),\, s\in(t_1-h, t_1).$$
Since $\xi\leq 1,$ from above inequality, we have
\begin{equation}\label{Am16}
\left( \Psi(t_1-h)-\Psi(0)\right)^{\xi-1}>\left( \Psi(s)-\Psi(0)\right)^{\xi-1}>\left( \Psi(t_1)-\Psi(0)\right)^{\xi-1},~s\in(t_1-h, t_1).
\end{equation}
From the inequalities \eqref{Am15} and \eqref{Am16}, we have
$$
\Psi'(s)\left( \Psi(t_1)-\Psi(s)\right)^{-\xi}m(s)> -h K(t_1)\Psi'(s)\left( \Psi(t_1)-\Psi(s)\right)^{-\xi}\left( \Psi(t_1-h)-\Psi(0)\right)^{\xi-1}
$$
for $s\in(t_1-h, t_1).$ Integrating above inequality between $t_1-h$ to $t_1$ and using the definition of $I_2$, we obtain
\begin{align}\label{Aa15}
I_2
&> -h K(t_1) \left( \Psi(t_1-h)-\Psi(0)\right)^{\xi-1}\int_{t_1-h}^{t_1} \Psi'(s)\left( \Psi(t_1)-\Psi(s)\right)^{-\xi} ds\nonumber\\
&=-h K(t_1) \left(  \Psi(t_1-h)-\Psi(0)\right)^{\xi-1}\frac{\left( \Psi(t_1)-\Psi(t_1-h)\right)^{1-\xi}}{1-\xi}.
\end{align}
From the inequalities \eqref{A13} and \eqref{Aa15}, we have
$$
\frac{M_\Psi (t_1)-M_\Psi (t_1-h)}{h}> -K(t_1) \left(  \Psi(t_1-h)-\Psi(0)\right)^{\xi-1}\frac{\left( \Psi(t_1)-\Psi(t_1-h)\right)^{1-\xi}}{1-\xi}.
$$
Taking the limit as $h\rightarrow 0$ in the above inequality and using the continuity of $\Psi$, we obtain 
$$
\left[\frac{d}{dt} M_\Psi (t)\right]_{t=t_1}\geq 0.
$$
Since $\Psi'(t_1)>0$, using the definition of $M_\Psi$, we have
$$
\left[\frac{1}{\Psi'(t)} \frac{d}{dt}\left(\frac{1}{\Gamma(1-\xi)} \int_{0}^{t} \Psi'(s)\left( \Psi(t)-\Psi(s)\right)^{-\xi} m(s) ds\right) \right]_{t=t_1}\geq 0.
$$
This gives
$$
^H \mathcal{D}^{\mu,\,\nu\,;\, \Psi}_{0^+}m(t_1)=\left[I_{0^+}^{\nu \left(
1-\mu \right) \, ;\Psi }\left( \frac{1}{\Psi ^{\prime }\left( t\right) }\frac{d}{dt}\right) I_{0^+}^{\left( 1-\nu \right) \left( 1-\mu
\right) \, ;\Psi }m(t)  \right]_{t=t_1}\geq 0.
$$
\end{proof}

The dual of the Theorem \ref{Lem41} is also hold.
\begin{theorem}\label{Lem42}
Let $m\in C_{1-\xi ;\, \Psi }(J,\,\R)$. Let   $t_1\in(0, T]$ be such that  $m(t_1)=0$ and $m(t)\geq 0,\,t\in (0, t_1)$.  Then,\,$^H \mathcal{D}^{\mu,\,\nu\,;\, \Psi}_{0^+}m(t_1)\leq 0$. 
\end{theorem}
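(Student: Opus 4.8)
The plan is to reduce Theorem \ref{Lem42} to the already-established Theorem \ref{Lem41} by a sign change, exploiting the linearity of the $\Psi$-Hilfer derivative. First I would set $\tilde{m} = -m$. Since $C_{1-\xi;\Psi}(J,\R)$ is a linear space, $\tilde{m}\in C_{1-\xi;\Psi}(J,\R)$. Moreover $\tilde{m}(t_1) = -m(t_1) = 0$, and from the hypothesis $m(t)\geq 0$ on $(0,t_1)$ we obtain $\tilde{m}(t) = -m(t)\leq 0$ on $(0,t_1)$. Thus $\tilde{m}$ satisfies exactly the hypotheses of Theorem \ref{Lem41} at the point $t_1$.

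Applying Theorem \ref{Lem41} to $\tilde{m}$ yields $^H\mathcal{D}^{\mu,\nu;\Psi}_{0^+}\tilde{m}(t_1)\geq 0$. The only remaining ingredient is the linearity of the operator $^H\mathcal{D}^{\mu,\nu;\Psi}_{0^+}$: it is built by composing the $\Psi$-RL integral $I_{0^+}^{\cdot\,;\Psi}$ (an integral operator, hence linear) with the differential operator $\frac{1}{\Psi'}\frac{d}{dt}$ (also linear), so the composition is linear and $^H\mathcal{D}^{\mu,\nu;\Psi}_{0^+}(-m) = -\,^H\mathcal{D}^{\mu,\nu;\Psi}_{0^+}m$. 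Therefore $-\,^H\mathcal{D}^{\mu,\nu;\Psi}_{0^+}m(t_1)\geq 0$, which gives $^H\mathcal{D}^{\mu,\nu;\Psi}_{0^+}m(t_1)\leq 0$, as claimed.

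Alternatively, one could mirror the proof of Theorem \ref{Lem41} verbatim with all inequalities reversed: define the same auxiliary function $M_\Psi$, split the difference $M_\Psi(t_1)-M_\Psi(t_1-h)$ into $I_1+I_2$, observe now that $I_1\leq 0$ (the nonnegative factor $m(s)$ multiplies the same negative kernel difference), bound $I_2$ from above via the continuity estimate with the reversed sign of $K(t_1)$, and conclude $\big[\frac{d}{dt}M_\Psi(t)\big]_{t=t_1}\leq 0$ before unwinding through $\Psi'(t_1)>0$. The only delicate point on that route is tracking every sign reversal in the chain \eqref{A14}--\eqref{Aa15}. Because the reduction via $\tilde{m}=-m$ sidesteps this bookkeeping entirely, I expect essentially no real obstacle and would present the short linearity argument as the proof, relegating the direct mirroring to a remark.
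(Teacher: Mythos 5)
Your proposal is correct and coincides with the paper's own proof, which likewise applies Theorem \ref{Lem41} to $-m$ and uses linearity of the $\Psi$-Hilfer derivative; you merely spell out the verification of the hypotheses and the linearity step in more detail than the paper does.
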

\begin{proof}
By utilizing the hypothesis on $m$, it follows that $(-m)$ fulfills the assumptions of Theorem \ref{Lem41} and the proof follows by applying it. 
\end{proof}
\section{Hybrid  differential inequalities with $\Psi$-Hilfer derivative}
\begin{theorem}\label{tha4.1}
Let $f\in C\left(J\times\R, \R\setminus\{0\} \right),\,  g\in \mathfrak{C}\left(J\times\R, \R \right) $ and assume that the hypothesis {\normalfont(H1)(i)} hold. Let $y,\,z\in C_{1-\xi ;\, \Psi }(J,\,\R)$ are such that 
\begin{align}\label{a4}
^H \mathcal{D}^{\mu,\,\nu\,;\, \Psi}_{0^+}\left[ \frac{y(t)}{ f(t, y(t))}\right] 
&\leq g(t, y(t)),~a.e. ~t \in  (0,\,T],  ~
\end{align}
\begin{align}\label{a5}
^H \mathcal{D}^{\mu,\,\nu\,;\, \Psi}_{0^+}\left[ \frac{z(t)}{ f(t, z(t))}\right] 
&\geq g(t, z(t)),~a.e. ~t \in  (0,\,T],
\end{align}
one of the inequalities being strict. Then 
$$\left( \Psi \left( t\right) -\Psi \left( 0\right) \right)^{1-\xi }y(t)|_{t=0}<\left( \Psi \left( t\right) -\Psi \left( 0\right) \right)^{1-\xi }z(t)|_{t=0}
$$ 
implies that 
$$y\prec z~\text{in}~ C_{1-\xi ;\, \Psi }(J,\,\R).$$
\end{theorem}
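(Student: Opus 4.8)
The plan is to argue by contradiction, reducing the ordering in $C_{1-\xi;\Psi}(J,\R)$ to a pointwise statement and then invoking the derivative estimate of Theorem \ref{Lem41}. Write $Y(t)=\frac{y(t)}{f(t,y(t))}$ and $Z(t)=\frac{z(t)}{f(t,z(t))}$, so that \eqref{a4}--\eqref{a5} read $^H \mathcal{D}^{\mu,\,\nu\,;\, \Psi}_{0^+}Y(t)\le g(t,y(t))$ and $^H \mathcal{D}^{\mu,\,\nu\,;\, \Psi}_{0^+}Z(t)\ge g(t,z(t))$, with one inequality strict; by symmetry I may assume the strict one is \eqref{a5}. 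First I would record that $Y,Z\in C_{1-\xi;\Psi}(J,\R)$, since the weighted function $(\Psi(t)-\Psi(0))^{1-\xi}Y(t)=\frac{(\Psi(t)-\Psi(0))^{1-\xi}y(t)}{f(t,y(t))}$ is continuous on $[0,T]$ (the numerator is continuous because $y\in C_{1-\xi;\Psi}(J,\R)$, and $f$ is continuous and nowhere zero), and likewise for $Z$.

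Suppose, for contradiction, that $y\prec z$ fails on $J$. Denote $\phi(t)=(\Psi(t)-\Psi(0))^{1-\xi}y(t)$ and $\psi(t)=(\Psi(t)-\Psi(0))^{1-\xi}z(t)$, both continuous on $[0,T]$ with $\phi(0)<\psi(0)$ by hypothesis. If $\phi<\psi$ everywhere we are done, so the continuous function $\psi-\phi$ must vanish somewhere in $(0,T]$; let $t_1\in(0,T]$ be the first such point, so that $\phi(t)<\psi(t)$ on $[0,t_1)$ and $\phi(t_1)=\psi(t_1)$. Dividing by the strictly positive factor $(\Psi(t)-\Psi(0))^{1-\xi}$ for $t\in(0,t_1]$ gives $y(t)<z(t)$ on $(0,t_1)$ and $y(t_1)=z(t_1)$.

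Next I would set $m(t)=Y(t)-Z(t)\in C_{1-\xi;\Psi}(J,\R)$ and verify that it satisfies the hypotheses of Theorem \ref{Lem41} at $t_1$. Because $y(t_1)=z(t_1)$ we have directly $Y(t_1)=Z(t_1)$, i.e. $m(t_1)=0$; and because $y(t)<z(t)$ on $(0,t_1)$, the monotonicity of $v\mapsto\frac{v}{f(t,v)}$ in (H1)(i) yields $Y(t)\le Z(t)$, i.e. $m(t)\le 0$, on $(0,t_1)$. Theorem \ref{Lem41} then gives $^H \mathcal{D}^{\mu,\,\nu\,;\, \Psi}_{0^+}m(t_1)\ge 0$, that is $^H \mathcal{D}^{\mu,\,\nu\,;\, \Psi}_{0^+}Y(t_1)\ge {}^H \mathcal{D}^{\mu,\,\nu\,;\, \Psi}_{0^+}Z(t_1)$. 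On the other hand, combining the two differential inequalities with $y(t_1)=z(t_1)$ (so $g(t_1,y(t_1))=g(t_1,z(t_1))$) and the strictness of \eqref{a5}, I obtain $^H \mathcal{D}^{\mu,\,\nu\,;\, \Psi}_{0^+}Y(t_1)\le g(t_1,y(t_1))=g(t_1,z(t_1))<{}^H \mathcal{D}^{\mu,\,\nu\,;\, \Psi}_{0^+}Z(t_1)$, a strict reversal. This contradiction shows that no crossing point $t_1$ exists, so $\phi<\psi$ on all of $[0,T]$, which is precisely $y\prec z$ in $C_{1-\xi;\Psi}(J,\R)$.

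The main obstacle I anticipate is the final comparison of the two derivatives at the single contact point $t_1$: Theorem \ref{Lem41} delivers only the non-strict inequality $^H \mathcal{D}^{\mu,\,\nu\,;\, \Psi}_{0^+}m(t_1)\ge 0$, so a contradiction can arise only because one of the defining inequalities was assumed strict. This is exactly where the hypothesis ``one of the inequalities being strict'' is consumed, and it is essential that the strictness survives at $t_1$ after using $y(t_1)=z(t_1)$ to identify the two values of $g$. A secondary technical point is justifying that $Y,Z$, and hence $m$, genuinely lie in $C_{1-\xi;\Psi}(J,\R)$ so that Theorem \ref{Lem41} applies; this rests on the continuity and non-vanishing of $f$ together with membership of $y,z$ in the weighted space.
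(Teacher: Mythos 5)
Your argument is correct and is essentially the paper's own proof: the same contradiction setup at the first crossing point $t_1$, the same reduction to $m=Y-Z$ with $m(t_1)=0$ and $m\le 0$ on $(0,t_1)$ via (H1)(i), the same appeal to Theorem \ref{Lem41}, and the same use of strictness of one inequality together with $g(t_1,y(t_1))=g(t_1,z(t_1))$ to reach the contradiction. Your explicit justification that $Y,Z\in C_{1-\xi;\Psi}(J,\R)$ and your remark on handling the case where \eqref{a4} rather than \eqref{a5} is strict are small additions the paper leaves implicit, but the route is the same.
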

\begin{proof}
Assume that the conclusion of the theorem doesn't hold. Using the continuity of $\left( \Psi \left( t\right) -\Psi \left( 0\right) \right)
^{1-\xi }y(t)$ and $\left( \Psi \left( t\right) -\Psi \left( 0\right) \right)
^{1-\xi }z(t)$ on $J$,  there exists $t_1\in (0,\,T]$ such that 
$$
\left( \Psi \left( t_1\right) -\Psi \left( 0\right) \right)
^{1-\xi }y(t_1)=\left( \Psi \left( t_1\right) -\Psi \left( 0\right) \right)
^{1-\xi }z(t_1)
$$ 
and 
$$
\left( \Psi \left( t\right) -\Psi \left( 0\right) \right)
^{1-\xi }y(t)<\left( \Psi \left( t\right) -\Psi \left( 0\right) \right)
^{1-\xi }z(t),~\, \, t\in [0, t_1).
$$ 
This gives 
\begin{equation}\label{a6}
y(t_1)=z(t_1)~\text{and}~
y(t) < z(t),\,~ \,t\in (0, t_1).
\end{equation}
Define,
$$
Y(t)=\frac{y(t)}{ f(t, y(t))}~ ~\text{and}~ ~Z(t)=\frac{z(t)}{ f(t, z(t))},~t\in (0, T].
$$
Then $Y, Z\in C_{1-\xi ;\, \Psi }(J,\,\R)$.
Using the relations in  the equation \eqref{a6} and the hypothesis (H1)(i), we obtain 
$$
Y(t_1)=Z(t_1) ~\text{and} ~Y(t)\leq Z(t),~ \,t\in (0, t_1).
$$
Define
$m(t) = Y(t) - Z(t), ~t\in (0, T]$. Then, $m\in C_{1-\xi ;\, \Psi }(J,\,\R)$. Further, $t_1\in (0,\,T] $  such that  
$$
m(t_1) = 0 ~ \text{and}~ m(t)\leq 0, ~t\in(0,\, t_1).
$$ 
Therefore by Theorem \ref{Lem41}, we obtain
$$
^H \mathcal{D}^{\mu,\,\nu\,;\, \Psi}_{0^+}m(t_1)\geq 0.
$$
This implies
$$
^H \mathcal{D}^{\mu,\,\nu\,;\, \Psi}_{0^+}Y(t_1)\geq\, ^H \mathcal{D}^{\mu,\,\nu\,;\, \Psi}_{0^+}Z(t_1).
$$
By  inequality \eqref{a4}  and 
 assuming inequality \eqref{a5} is strict, we obtain
$$
g(t_1, y(t_1))\geq\, ^H \mathcal{D}^{\mu,\,\nu\,;\, \Psi}_{0^+}Y(t_1)\geq\, ^H \mathcal{D}^{\mu,\,\nu\,;\, \Psi}_{0^+}Z(t_1)>g(t_1, z(t_1)).
$$
This is contradicts to the fact $y(t_1)= z(t_1)$. Therefore, we must have 
$$
y\prec z~\text{in}~ C_{1-\xi ;\, \Psi }(J,\,\R).
$$
\end{proof}
\begin{theorem}
Assume that the conditions of Theorem \ref{tha4.1} hold with nonstrict inequalities
 \eqref{a4} and \eqref{a5}. Further, assume that there exists a real number $L>0$ such that
\begin{equation}\label{a11}
g(t, x_1)-g(t, x_2)\leq L\left( \frac{x_1}{f(t, x_1)}-\frac{x_2}{f(t, x_2)}\right),\,~a.e.\,\, t\in J,
\end{equation}
for all $x_1, x_2 \in \R$ with $x_1\geq x_2$. 
Then 
$$
\left( \Psi \left( t\right) -\Psi \left( 0\right) \right)^{1-\xi }y(t)|_{t=0}\leq\left( \Psi \left( t\right) -\Psi \left( 0\right) \right)^{1-\xi }z(t)|_{t=0}
$$ implies that 
$$y\preceq z~\text{in}~ C_{1-\xi ;\, \Psi }(J,\,\R).$$
\end{theorem}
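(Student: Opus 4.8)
The plan is to run the classical strict-to-nonstrict upgrade by an $\epsilon$-perturbation argument, thereby reducing the nonstrict case to Theorem \ref{tha4.1}. Concretely, I would push $z$ slightly upward to a one-parameter family $z_\epsilon$ $(\epsilon>0)$ that satisfies the \emph{strict} version of \eqref{a5} together with a strict initial inequality over $y$, apply Theorem \ref{tha4.1} to the pair $(y,z_\epsilon)$ to get $y\prec z_\epsilon$, and then recover the desired conclusion by letting $\epsilon\to 0^+$. The reason a naive additive perturbation is not enough is that adding a term with vanishing $\Psi$-Hilfer derivative (such as $\epsilon(\Psi(t)-\Psi(0))^{\xi-1}$) turns \eqref{a11} against us and cannot make the differential inequality strict; the perturbation must itself feed a positive contribution back through the derivative, which is exactly what a Mittag-Leffler profile provides.

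First I would fix $\epsilon>0$, write $Z(t)=z(t)/f(t,z(t))$, and introduce a positive barrier $\phi$ through $Z_\epsilon=Z+\epsilon\phi$. The function $\phi$ should satisfy (a) $\phi(t)>0$ on $(0,T]$ with $\left.(\Psi(t)-\Psi(0))^{1-\xi}\phi(t)\right|_{t=0}>0$, and (b) $^H \mathcal{D}^{\mu,\,\nu\,;\, \Psi}_{0^+}\phi(t)>L\,\phi(t)$ on $(0,T]$. The natural candidate is $\phi(t)=(\Psi(t)-\Psi(0))^{\xi-1}E_\mu\!\left(\lambda(\Psi(t)-\Psi(0))^\mu\right)$ with $\lambda>0$ large: expanding $E_\mu$ as a power series in $(\Psi(t)-\Psi(0))^\mu$ and differentiating term by term, the $k=0$ term is annihilated by Lemma \ref{lema2}(iii), while Lemma \ref{lema2}(iv) transforms each higher power, so $^H \mathcal{D}^{\mu,\,\nu\,;\, \Psi}_{0^+}\phi$ is again a series in the same powers $(\Psi(t)-\Psi(0))^{\xi+j\mu-1}$. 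Comparing its coefficients with $L$ times those of $\phi$ reduces (b) to a pointwise inequality on a sequence of gamma-ratios that tends to $1$ as $j\to\infty$, hence is bounded below by a positive constant; choosing $\lambda$ sufficiently large relative to $L$ then secures (b), and (a) is immediate since $\left.(\Psi(t)-\Psi(0))^{1-\xi}\phi(t)\right|_{t=0}=E_\mu(0)=1>0$. Because $f$ is bounded and $v\mapsto v/f(t,v)$ is strictly increasing and continuous by {\normalfont(H1)(i)}, it is a bijection of $\R$, so I can define $z_\epsilon(t)$ implicitly by $z_\epsilon(t)/f(t,z_\epsilon(t))=Z_\epsilon(t)$; then $z_\epsilon>z$ (as $\phi>0$) and $z_\epsilon,Z_\epsilon\in C_{1-\xi;\,\Psi}(J,\R)$.

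Next I would verify the strict differential inequality and the strict initial condition for $z_\epsilon$. Applying $^H \mathcal{D}^{\mu,\,\nu\,;\, \Psi}_{0^+}$ to $Z_\epsilon=Z+\epsilon\phi$ and using \eqref{a5} gives $^H \mathcal{D}^{\mu,\,\nu\,;\, \Psi}_{0^+}Z_\epsilon\geq g(t,z)+\epsilon\,{}^H \mathcal{D}^{\mu,\,\nu\,;\, \Psi}_{0^+}\phi$. Since $z_\epsilon\geq z$, the hypothesis \eqref{a11} with $x_1=z_\epsilon,\ x_2=z$ yields $g(t,z)\geq g(t,z_\epsilon)-L\,(Z_\epsilon-Z)=g(t,z_\epsilon)-L\epsilon\phi$, and combining this with property (b) produces $^H \mathcal{D}^{\mu,\,\nu\,;\, \Psi}_{0^+}Z_\epsilon\geq g(t,z_\epsilon)+\epsilon\bigl({}^H \mathcal{D}^{\mu,\,\nu\,;\, \Psi}_{0^+}\phi-L\phi\bigr)>g(t,z_\epsilon)$, i.e. the strict form of \eqref{a5} for $z_\epsilon$. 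For the initial data, property (a) gives $\left.(\Psi(t)-\Psi(0))^{1-\xi}z_\epsilon(t)\right|_{t=0}>\left.(\Psi(t)-\Psi(0))^{1-\xi}z(t)\right|_{t=0}\geq\left.(\Psi(t)-\Psi(0))^{1-\xi}y(t)\right|_{t=0}$, the last step by the hypothesis of the theorem. With \eqref{a4} holding for $y$ and the strict inequality holding for $z_\epsilon$, Theorem \ref{tha4.1} applies to $(y,z_\epsilon)$ and gives $y\prec z_\epsilon$ in $C_{1-\xi;\,\Psi}(J,\R)$.

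Finally I would pass to the limit. Since $\lVert Z_\epsilon-Z\rVert_{C_{1-\xi;\Psi}}=\epsilon\,\lVert\phi\rVert_{C_{1-\xi;\Psi}}\to 0$ and the inverse of $v\mapsto v/f(t,v)$ is continuous, $z_\epsilon(t)\to z(t)$ for each $t$, whence $(\Psi(t)-\Psi(0))^{1-\xi}z_\epsilon(t)\to(\Psi(t)-\Psi(0))^{1-\xi}z(t)$ pointwise; letting $\epsilon\to 0^+$ in $(\Psi(t)-\Psi(0))^{1-\xi}y(t)<(\Psi(t)-\Psi(0))^{1-\xi}z_\epsilon(t)$ gives the nonstrict inequality, that is $y\preceq z$ in $C_{1-\xi;\,\Psi}(J,\R)$. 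I expect the construction of the barrier $\phi$ in the second step to be the main obstacle: one needs a strictly positive function whose $\Psi$-Hilfer derivative strictly dominates $L\phi$ while keeping a positive weighted value at $t=0$, and the delicate point is the uniform coefficientwise domination $^H \mathcal{D}^{\mu,\,\nu\,;\, \Psi}_{0^+}\phi>L\phi$ on $(0,T]$, equivalently the positive lower bound for the associated gamma-ratio sequence, which is what dictates taking $\lambda$ large relative to $L$.
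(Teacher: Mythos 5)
Your proposal is correct and follows the same overall architecture as the paper's proof: perturb $Z=z/f(\cdot,z)$ to $Z_\epsilon=Z+\epsilon\phi$ with a Mittag--Leffler barrier, use \eqref{a11} with $x_1=z_\epsilon\geq x_2=z$ to convert the perturbation into a strict version of \eqref{a5}, invoke Theorem \ref{tha4.1} to get $y\prec z_\epsilon$, and let $\epsilon\to 0^+$. The one substantive difference is your choice of barrier. The paper takes $\phi(t)=E_{\mu}\left(2L\left(\Psi(t)-\Psi(0)\right)^{\mu}\right)$, for which the eigen-identity $^H \mathcal{D}^{\mu,\,\nu\,;\, \Psi}_{0^+}\phi=2L\phi$ comes out exactly from Lemma \ref{lema2}(iv), so no gamma-ratio estimate is needed; you instead take $\phi(t)=\left(\Psi(t)-\Psi(0)\right)^{\xi-1}E_{\mu}\left(\lambda\left(\Psi(t)-\Psi(0)\right)^{\mu}\right)$ and must choose $\lambda$ large by bounding the ratio $\Gamma((j+1)\mu+1)\Gamma(j\mu+\xi)/\left[\Gamma(j\mu+1)\Gamma((j+1)\mu+\xi)\right]$, which indeed tends to $1$, so this works. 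What your extra factor buys is precisely the point where the paper is weakest: Theorem \ref{tha4.1} requires the \emph{strict} weighted initial inequality, and since $\left.(\Psi(t)-\Psi(0))^{1-\xi}E_{\mu}\left(2L(\Psi(t)-\Psi(0))^{\mu}\right)\right|_{t=0}=0$ when $\xi<1$, the paper's deduction of $\left.(\Psi(t)-\Psi(0))^{1-\xi}z_\epsilon(t)\right|_{t=0}>\left.(\Psi(t)-\Psi(0))^{1-\xi}z(t)\right|_{t=0}$ from the pointwise inequality $z_\epsilon>z$ on $(0,T]$ does not actually follow (a strict inequality on the open interval only survives as a nonstrict one in the limit $t\to 0^+$). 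Your barrier has weighted initial value $E_\mu(0)=1>0$, so the strict initial separation is genuine. In short: same method, but your variant is the more careful one, at the cost of an explicit eigenrelation.
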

\begin{proof}
Let any $\epsilon>0$. Set
\begin{equation}\label{a12}
\frac{z_\epsilon(t)}{f(t, z_\epsilon(t))}=\frac{z(t)}{f(t, z(t))}+\epsilon\, E_{\mu}\left( 2L\left(  \Psi \left( t\right) -\Psi \left( 0\right) \right)^{\mu}\right),~ t\in  (0, T],
\end{equation}
where $ E_{\mu}(\cdot) $ is the one parameter Mittag-Leffler function.
This implies that
$$
\frac{z_\epsilon(t)}{f(t, z_\epsilon(t))}>\frac{z(t)}{f(t, z(t))},~t\in  (0, T].
$$
By using the hypothesis (H1)(i), we have 
\begin{equation}\label{a13}
z_\epsilon(t)>z(t),~t\in  (0, T].
\end{equation}
Define 
$$
Z_\epsilon(t)=\frac{z_\epsilon(t)}{f(t, z_\epsilon(t))}~\text{and}~Z(t)=\frac{z(t)}{f(t, z(t))},~t\in  (0, T].
$$
Then, the equation \eqref{a12} takes the form
$$
Z_\epsilon(t)=Z(t)+\epsilon E_{\mu}\left( 2L\left(  \Psi \left( t\right) -\Psi \left( 0\right) \right)^{\mu}\right),~t\in  (0, T].
$$
Operating $\Psi$-Hilfer fractional derivative operator $^H \mathcal{D}^{\mu,\,\nu\,;\, \Psi}_{0^+}$ on both sides of the above equation and using the inequality \eqref{a5}, we obtain
\begin{align}\label{a14}
^H \mathcal{D}^{\mu,\,\nu\,;\, \Psi}_{0^+}Z_\epsilon(t)
&=\,^H \mathcal{D}^{\mu,\,\nu\,;\, \Psi}_{0^+}Z(t)+\epsilon\, ^H \mathcal{D}^{\mu,\,\nu\,;\, \Psi}_{0^+}E_{\mu}\left( 2L\left(  \Psi \left( t\right) -\Psi \left( 0\right) \right)^{\mu}\right)\nonumber\\
&\geq g(t, z(t))+\epsilon\, ^H \mathcal{D}^{\mu,\,\nu\,;\, \Psi}_{0^+}E_{\mu}\left( 2L\left(  \Psi \left( t\right) -\Psi \left( 0\right) \right)^{\mu}\right).
\end{align}
But by Lemma \ref{lema2}(iv), we obtain
\begin{align}\label{a15}
^H \mathcal{D}^{\mu,\,\nu\,;\, \Psi}_{0^+}E_{\mu}\left( 2L\left(  \Psi \left( t\right) -\Psi \left( 0\right) \right)^{\mu}\right)
&=\,^H \mathcal{D}^{\mu,\,\nu\,;\, \Psi}_{0^+}\left\lbrace \sum_{k=0}^{\infty}\frac{[2L\left(  \Psi \left( t\right) -\Psi \left( 0\right) \right)^{\mu}]^k}{\Gamma(k\mu+1)}\right\rbrace \nonumber \\
&=\sum_{k=1}^{\infty}\frac{2^kL^k}{\Gamma(k\mu+1)} \frac{\Gamma(k\mu+1)}{\Gamma(k\mu+1-\mu)}\left(  \Psi \left( t\right) -\Psi \left( 0\right) \right)^{k\mu-\mu}\nonumber\\
&=\sum_{k=0}^{\infty}\frac{2^{k+1}L^{k+1}}{\Gamma(k\mu+\mu+1-\mu)}\left(  \Psi \left( t\right) -\Psi \left( 0\right) \right)^{k\mu+\mu-\mu}\nonumber\\
&=2L E_{\mu}\left( 2L\left(  \Psi \left( t\right) -\Psi \left( 0\right) \right)^{\mu}\right).
\end{align}
Therefore the inequality \eqref{a14}, reduces to
\begin{equation}\label{aa15}
^H \mathcal{D}^{\mu,\,\nu\,;\, \Psi}_{0^+}Z_\epsilon(t)
\geq g(t, z(t))+2L\epsilon E_{\mu}\left( 2L\left(  \Psi \left( t\right) -\Psi \left( 0\right) \right)^{\mu}\right),t\in(0, T].
\end{equation}
Using the condition on $g$ given in  \eqref{a11} and using the equation \eqref{a12},   we get
\begin{align}\label{a16}
g(t, z_\epsilon(t))-g(t, z(t))&\leq L\left[  \frac{z_\epsilon(t)}{f(t, z_\epsilon(t))}-\frac{z(t)}{f(t, z(t))}\right] \nonumber\\
&\leq   L\epsilon\, E_{\mu}\left( 2L\left(  \Psi \left( t\right) -\Psi \left( 0\right) \right)^{\mu}\right), ~ t\in J.
\end{align}
Utilizing the inequality \eqref{a16} in the inequality \eqref{aa15}, we get
\begin{align*}
^H \mathcal{D}^{\mu,\,\nu\,;\, \Psi}_{0^+}Z_\epsilon(t)
&\geq g(t, z_\epsilon(t))-L\epsilon E_{\mu}\left( 2L\left(  \Psi \left( t\right) -\Psi \left( 0\right) \right)^{\mu}\right)+2L \epsilon E_{\mu}\left( 2L\left(  \Psi \left( t\right) -\Psi \left( 0\right) \right)^{\mu}\right)\\
&\geq g(t, z_\epsilon(t))+L\epsilon E_{\mu}\left( 2L\left(  \Psi \left( t\right) -\Psi \left( 0\right) \right)^{\mu}\right)\\
&>  g(t, z_\epsilon(t)).
\end{align*}
Therefore,
$$
^H \mathcal{D}^{\mu,\,\nu\,;\, \Psi}_{0^+}\left[\frac{z_\epsilon(t)}{f(t, z_\epsilon(t) )} \right] >  g(t, z_\epsilon(t)),~t\in(0, T].
$$
Now, from the inequality \eqref{a13}, we have 
$$
\left( \Psi \left( t\right) -\Psi \left( 0\right) \right)^{1-\xi }z_\epsilon(t)|_{t=0}>\left( \Psi \left( t\right) -\Psi \left( 0\right) \right)^{1-\xi }z(t)|_{t=0}
$$
and from the hypothesis 
$$
\left( \Psi \left( t\right) -\Psi \left( 0\right) \right)^{1-\xi }z(t)|_{t=0}\geq\left( \Psi \left( t\right) -\Psi \left( 0\right) \right)^{1-\xi }y(t)|_{t=0}.  
$$
Therefore,
\begin{equation}\label{aa11}
\left( \Psi \left( t\right) -\Psi \left( 0\right) \right)^{1-\xi }z_\epsilon(t)|_{t=0}>\left( \Psi \left( t\right) -\Psi \left( 0\right) \right)^{1-\xi }y(t)|_{t=0}.
\end{equation}
By applying Theorem \ref{tha4.1} with $z=z_\epsilon$, the condition \eqref{aa11} implies
\begin{equation}\label{aa12}
y \prec z_\epsilon~\text{in}~ C_{1-\xi ;\, \Psi }(J,\,\R).
\end{equation} 
Taking $\epsilon\rightarrow0$ in the equation \eqref{a12} and using the hypothesis (H1)(i), we have 
\begin{equation}\label{aa13}
\lim\limits_{\epsilon\rightarrow 0}z_\epsilon(t)=z(t).
\end{equation} 
Using \eqref{aa13}
\begin{align*}
\lim\limits_{\epsilon\rightarrow 0}\left\Vert z_\epsilon-z\right\Vert _{C_{1-\xi ;\,\Psi }\left(J,\,\R\right)  }=\lim\limits_{\epsilon\rightarrow 0}\left\lbrace \underset{t\in \left[ 0,T\right] 
}{\max } \left( \Psi \left( t\right) -\Psi \left( 0\right) \right)
^{1-\xi }\left\vert z_\epsilon(t)-z(t)\right| \right\rbrace =0.
\end{align*}
This gives $\lim\limits_{\epsilon\rightarrow 0}z_\epsilon=z$ in $ C_{1-\xi ;\, \Psi }(J,\,\R)$.  Therefore, the  inequality \eqref{aa12} reduces to $y\preceq z$ in $ C_{1-\xi ;\, \Psi }(J,\,\R)$.
\end{proof}

\section{Maximal and minimal solutions}
In this section, we will demonstrate the existence of maximal and minimal solutions for the hybrid FDEs \eqref{eqq1}-\eqref{eqq2} in  the weighted space $C_{1-\xi ;\, \Psi }(J,\,\R)$.
\begin{definition}
A solution $r$ of the hybrid FDEs \eqref{eqq1}-\eqref{eqq2} is said to be maximal solution if for any other solution $y$ to the hybrid FDEs \eqref{eqq1}-\eqref{eqq2}, we have  $y\preceq r$\, in\, $ C_{1-\xi ;\, \Psi }(J,\,\R)$.
\end{definition}
\begin{definition}
A solution $q$ of the hybrid FDEs \eqref{eqq1}-\eqref{eqq2} is said to be minimal solution if for any other solution $y$ to the hybrid FDEs \eqref{eqq1}-\eqref{eqq2}, we have  $q\preceq y $\, in\, $ C_{1-\xi ;\, \Psi }(J,\,\R)$.
\end{definition}

Let any $\epsilon>0$  and consider the following hybrid FDEs,
\begin{align}
&^H \mathcal{D}^{\mu,\,\nu\,;\, \Psi}_{0^+}\left[ \frac{y(t)}{ f(t, y(t))}\right] 
= g(t, y(t))+\epsilon,~a.e. ~t \in  (0,\,T],  ~\label{eqq11}\\
&\left( \Psi \left( t\right) -\Psi \left( 0\right) \right)^{1-\xi }y(t)|_{t=0}=y_{0}+\epsilon \in\R .\label{eqq12}
\end{align}

\begin{theorem}\label{tha5.1}
Assume that the hypotheses {\normalfont(H1)-(H2)}  and the condition  \eqref{aa1} hold. Then for every small number $\epsilon>0,$ the hybrid FDEs   \eqref{eqq11}-\eqref{eqq12} has a solution in $ C_{1-\xi ;\, \Psi }(J,\,\R)$.
\end{theorem}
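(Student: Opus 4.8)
The plan is to observe that the perturbed system \eqref{eqq11}--\eqref{eqq12} is of exactly the same type as the original hybrid FDEs \eqref{eqq1}--\eqref{eqq2}: it is obtained by replacing the forcing term $g(t,y)$ with $g_\epsilon(t,y):=g(t,y)+\epsilon$ and the initial datum $y_0$ with $y_0+\epsilon$, while leaving $f$ untouched. Hence it suffices to show that, for every sufficiently small $\epsilon>0$, the triple $(f,g_\epsilon,y_0+\epsilon)$ satisfies the hypotheses of the Existence Theorem \ref{tha3.2}, and then to apply that theorem directly.

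First I would dispose of (H1): since $f$ is unchanged, both (H1)(i) and (H1)(ii) hold with the same Lipschitz constant $L$ and the same bound $K=\sup|f|$. For (H2), adding a constant preserves continuity in the state variable and measurability in time, so $g_\epsilon\in\mathfrak{C}(J\times\R,\R)$; and since
$$
|g_\epsilon(t,y)|=|g(t,y)+\epsilon|\le h(t)+\epsilon=:h_\epsilon(t),\qquad a.e.~t\in J,~y\in\R,
$$
with $h_\epsilon\in C(J,\R)$ and $\|h_\epsilon\|_\infty\le\|h\|_\infty+\epsilon$, hypothesis (H2) also holds for $g_\epsilon$ with the dominating function $h_\epsilon$.

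The only $\epsilon$-dependent requirement is the smallness condition \eqref{aa1}, which for the perturbed data becomes
$$
L\left\{\left|\frac{y_0+\epsilon}{f(0,y(0))}\right|+\frac{\|h_\epsilon\|_\infty\left(\Psi(T)-\Psi(0)\right)^{\mu}}{\Gamma(\mu+1)}\right\}<1.
$$
The left-hand side depends continuously on $\epsilon$ and, as $\epsilon\to0^+$, tends to the left-hand side of \eqref{aa1}, which is strictly less than $1$ by assumption. Hence there is $\epsilon_0>0$ such that the displayed inequality holds for all $0<\epsilon<\epsilon_0$, and for each such $\epsilon$ Theorem \ref{tha3.2} furnishes a solution $y_\epsilon\in C_{1-\xi;\Psi}(J,\R)$ of \eqref{eqq11}--\eqref{eqq12}.

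The step I expect to require the most care is precisely this smallness condition, because of the solution-dependent quantity $f(0,y(0))$ occurring in \eqref{aa1}. Rather than treat it as an a priori bound, I would track how \eqref{aa1} is actually used in the proof of Theorem \ref{tha3.2} --- namely in fixing the radius $R$ of the ball $S$ and in verifying $\alpha M<1$ for Lemma \ref{hyb} --- and confirm that both usages are stable under the replacements $y_0\mapsto y_0+\epsilon$ and $h\mapsto h_\epsilon$. Since every quantity so affected varies continuously with $\epsilon$ and reduces to its unperturbed, strictly admissible value at $\epsilon=0$, taking $\epsilon$ small preserves each inequality, and the existence of $y_\epsilon$ then follows from Lemma \ref{hyb} exactly as in Theorem \ref{tha3.2}.
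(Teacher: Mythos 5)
Your proposal is correct and follows essentially the same route as the paper: the authors likewise note that the strict inequality \eqref{aa1} persists under the replacements $y_0\mapsto y_0+\epsilon$ and $\|h\|_\infty\mapsto\|h\|_\infty+\epsilon$ for all sufficiently small $\epsilon>0$, and then invoke the argument of Theorem \ref{tha3.2} verbatim. If anything, your verification that $(f,g_\epsilon,y_0+\epsilon)$ satisfies (H1)--(H2) is spelled out more explicitly than in the paper, which leaves those checks implicit.
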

\begin{proof}
By the hypothesis, we have
$$
L\left\lbrace \left| \frac{y_0\,}{f(0,y(0))}\right| + \frac{\left\| h\right\|_\infty \left( \Psi \left( T\right) -\Psi \left( 0\right) \right)^{\mu }  }{\Gamma(\mu+1)}\right\rbrace <1,
$$
then there exists  $\epsilon_0>0$ such that 
$$
L\left\lbrace \left| \frac{y_0+\epsilon\,}{f(0,y(0))}\right| + \frac{\left( \left\| h\right\|_\infty+\epsilon\right)  \left( \Psi \left( T\right) -\Psi \left( 0\right) \right)^{\mu }  }{\Gamma(\mu+1)}\right\rbrace <1, ~ 0<\epsilon\leq \epsilon_0.
$$
Following the similar steps as in the proof of Theorem \ref{tha3.2}  for the existence of a solution, one can complete the remaining part of the proof.
\end{proof}

\begin{theorem}\label{tha5.2}
Assume that the hypotheses {\normalfont(H1)-(H2)}  and the condition  \eqref{aa1} hold. Then for every  $\epsilon>0,$  the hybrid FDEs   \eqref{eqq1}-\eqref{eqq2} has a maximal  solution  in $ C_{1-\xi ;\, \Psi }(J,\,\R)$.
\end{theorem}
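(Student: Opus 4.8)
The plan is to construct the maximal solution as the monotone limit, as $\epsilon\downarrow 0$, of the solutions to the perturbed problems \eqref{eqq11}-\eqref{eqq12}, following the classical Lakshmikantham scheme adapted to the $\Psi$-Hilfer setting. Fix $\epsilon_0>0$ as in Theorem \ref{tha5.1}, so that for every $\epsilon\in(0,\epsilon_0]$ the perturbed problem admits a solution $y(\cdot,\epsilon)\in C_{1-\xi;\,\Psi}(J,\R)$. Inspecting the proof of Theorem \ref{tha3.2}, each such solution lies in the ball of radius
$$
R_\epsilon=K\left\{\left|\frac{y_0+\epsilon}{f(0,y(0,\epsilon))}\right|+\frac{(\|h\|_\infty+\epsilon)\left(\Psi(T)-\Psi(0)\right)^{\mu+1-\xi}}{\Gamma(\mu+1)}\right\},
$$
since the forcing term is bounded by $h(t)+\epsilon$. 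As $R_\epsilon$ is bounded above uniformly on $(0,\epsilon_0]$, the family $\{y(\cdot,\epsilon)\}$ is uniformly bounded; moreover the equicontinuity estimate of Step 2(iii) of that proof depends only on $\|h\|_\infty+\epsilon\le\|h\|_\infty+\epsilon_0$ and on the boundedness of $f$, so the same computation shows $\{y(\cdot,\epsilon)\}$ is equicontinuous in $C_{1-\xi;\,\Psi}(J,\R)$.

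Next I would establish monotonicity in $\epsilon$. For $0<\epsilon_1<\epsilon_2\le\epsilon_0$, the function $y(\cdot,\epsilon_1)$ satisfies
$$
{}^H\mathcal{D}^{\mu,\,\nu\,;\,\Psi}_{0^+}\left[\frac{y(t,\epsilon_1)}{f(t,y(t,\epsilon_1))}\right]=g(t,y(t,\epsilon_1))+\epsilon_1<g(t,y(t,\epsilon_1))+\epsilon_2,
$$
whereas $y(\cdot,\epsilon_2)$ satisfies the equation with right-hand side $g(t,\cdot)+\epsilon_2$ as an equality, and the initial data obey $y_0+\epsilon_1<y_0+\epsilon_2$. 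Since $\tilde g(t,v):=g(t,v)+\epsilon_2$ again lies in $\mathfrak{C}(J\times\R,\R)$ and hypothesis (H1)(i) concerns $f$ alone, Theorem \ref{tha4.1} applies with the strict subsolution inequality and yields $y(\cdot,\epsilon_1)\prec y(\cdot,\epsilon_2)$. Thus $\epsilon\mapsto y(\cdot,\epsilon)$ is increasing, and as $\epsilon\downarrow 0$ the family decreases. Combined with the uniform boundedness and equicontinuity above, Arzel\`a--Ascoli together with this monotonicity guarantees that $r(t):=\lim_{\epsilon\to 0^+}y(t,\epsilon)$ exists in $C_{1-\xi;\,\Psi}(J,\R)$.

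It then remains to show $r$ solves \eqref{eqq1}-\eqref{eqq2} and is maximal. By Lemma \ref{lem41}, each $y(\cdot,\epsilon)$ satisfies
$$
y(t,\epsilon)=f(t,y(t,\epsilon))\left\{\frac{(y_0+\epsilon)\left(\Psi(t)-\Psi(0)\right)^{\xi-1}}{f(0,y(0,\epsilon))}+\mathcal{I}_{0^+}^{\mu\,;\,\Psi}\big[g(t,y(t,\epsilon))+\epsilon\big]\right\};
$$
passing to the limit $\epsilon\to 0^+$ via the continuity of $f$ and $g$, the convergence $y(\cdot,\epsilon)\to r$, the bound $|g|\le h$, and the Lebesgue dominated convergence theorem inside the $\Psi$-Riemann--Liouville integral, shows that $r$ satisfies the integral equation \eqref{a1}, hence is a solution of \eqref{eqq1}-\eqref{eqq2}. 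Finally, if $y$ is any solution of \eqref{eqq1}-\eqref{eqq2}, then $y$ is a \emph{strict} subsolution of the $\epsilon$-perturbed problem, since $g(t,y(t))<g(t,y(t))+\epsilon$ and $y_0<y_0+\epsilon$; Theorem \ref{tha4.1} then gives $y\prec y(\cdot,\epsilon)$ for every $\epsilon\in(0,\epsilon_0]$, and letting $\epsilon\to 0^+$ yields $y\preceq r$. Hence $r$ is the maximal solution.

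The step I expect to be the main obstacle is making the passage to the limit fully rigorous: one must argue simultaneously that the monotone family is compact in the weighted norm and that the limit inherits the integral equation, which relies on the bounds being uniform in $\epsilon$ over $(0,\epsilon_0]$ and on dominated-convergence control of the $\Psi$-fractional integral term. The delicate point is confirming that the equicontinuity and uniform boundedness extracted from Theorem \ref{tha3.2} genuinely hold uniformly in $\epsilon$, so that Arzel\`a--Ascoli may be invoked on the whole family at once rather than merely along a subsequence.
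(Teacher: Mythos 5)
Your proposal is correct and follows essentially the same route as the paper: existence of solutions to the $\epsilon$-perturbed problems via Theorem \ref{tha5.1}, monotonicity in $\epsilon$ via the strict comparison Theorem \ref{tha4.1}, uniform boundedness plus equicontinuity (uniform in $\epsilon$) to pass to the limit in the equivalent integral equation, and maximality by comparing an arbitrary solution with $y(\cdot,\epsilon)$ and letting $\epsilon\to 0^+$. The only cosmetic difference is that the paper works along a decreasing sequence $\epsilon_n\downarrow 0$ rather than the full family $\epsilon\in(0,\epsilon_0]$.
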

\begin{proof}
Let $\left\lbrace \epsilon_n\right\rbrace _{n=0}^\infty $ be a decreasing sequence of positive real numbers such that $\lim\limits_{n \rightarrow\infty }\epsilon_n=0 $ where  $\epsilon_0 $ is a positive real number satisfying the inequality 
\begin{equation}\label{a18}
L\left\lbrace \left| \frac{y_0+\epsilon_0\,}{f(0,y(0))}\right| + \frac{\left( \left\| h\right\|_\infty+\epsilon_0\right)  \left( \Psi \left( T\right) -\Psi \left( 0\right) \right)^{\mu }  }{\Gamma(\mu+1)}\right\rbrace <1.
\end{equation}
The existence of number $\epsilon_0$ can be achieved in the view of  the inequality \eqref{aa1}.
Since $\left\lbrace \epsilon_n\right\rbrace _{n=0}^\infty $ is a decreasing sequence, we have $\epsilon_n\leq \epsilon_0, n \in \N \cup \{0\}$ and one can verify that
\begin{equation*}
L\left\lbrace \left| \frac{y_0+\epsilon_n\,}{f(0,y(0))}\right| + \frac{\left( \left\| h\right\|_\infty+\epsilon_n\right)  \left( \Psi \left( T\right) -\Psi \left( 0\right) \right)^{\mu }  }{\Gamma(\mu+1)}\right\rbrace <1,~\text{for all}~n \in \N \cup \{0\}.
\end{equation*}
In the view of above condition,  Theorem \ref{tha5.1} guarantee the existence of solution $r(\cdot, \epsilon_n) \in  C_{1-\xi ;\, \Psi }(J,\,\R)$ of the hybrid FDEs
\begin{align}~\label{eqq13}
\begin{cases}
&^H \mathcal{D}^{\mu,\,\nu\,;\, \Psi}_{0^+}\left[ \frac{y(t)}{ f(t, y(t))}\right] 
= g(t, y(t))+\epsilon_n,~a.e. ~t \in  (0,\,T],  \\
&\left( \Psi \left( t\right) -\Psi \left( 0\right) \right)^{1-\xi }y(t)|_{t=0}=y_{0}+\epsilon_n \in\R. 
\end{cases}
\end{align}
From \eqref{eqq13}, it follows that
$$
^H \mathcal{D}^{\mu,\,\nu\,;\, \Psi}_{0^+}\left[ \frac{r(t, \epsilon_n)}{ f(t, r(t, \epsilon_n))}\right] 
> g(t, r(t, \epsilon_n)),~a.e. ~t \in  (0,\,T].
$$
Further, any solution $u\in  C_{1-\xi ;\, \Psi }(J,\,\R)$ of the hybrid FDEs   \eqref{eqq1}-\eqref{eqq2} satisfies
\begin{align*}
^H \mathcal{D}^{\mu,\,\nu\,;\, \Psi}_{0^+}\left[ \frac{u(t)}{ f(t, u(t))}\right] 
&\leq g(t, u(t)),~a.e. ~t \in  (0,\,T].
\end{align*}
By Theorem \ref{tha4.1}, the condition 
$$
\left( \Psi \left( t\right) -\Psi \left( 0\right) \right)^{1-\xi }u(t)|_{t=0}=y_{0}<y_{0}+\epsilon_n=\left( \Psi \left( t\right) -\Psi \left( 0\right) \right)^{1-\xi }r(t, \epsilon_n)|_{t=0}
$$ 
implies
\begin{equation}\label{a19}
u\prec r(\cdot, \epsilon_n),~\text{for all} ~n\in \N\cup\{0 \} ~\text{in}~ C_{1-\xi ;\, \Psi }(J,\,\R).
\end{equation}
Let $ r(t, \epsilon_1)$ and $ r(t, \epsilon_2)$ be the solutions of the hybrid FDEs  \eqref{eqq13}. Then, we have
\begin{align*}
^H \mathcal{D}^{\mu,\,\nu\,;\, \Psi}_{0^+}\left[ \frac{r(t, \epsilon_1)}{ f(t, r(t, \epsilon_1))}\right] 
&= g(t, r(t, \epsilon_1))+\epsilon_1\\
&> g(t, r(t, \epsilon_1))+\epsilon_2,
\end{align*}
$$
^H \mathcal{D}^{\mu,\,\nu\,;\, \Psi}_{0^+}\left[ \frac{r(t, \epsilon_2)}{ f(t, r(t, \epsilon_2))}\right] 
\leq  g(t, r(t, \epsilon_2))+\epsilon_2
$$
and
$$
\left( \Psi \left( t\right) -\Psi \left( 0\right) \right)^{1-\xi }r(t, \epsilon_1)|_{t=0}=y_{0}+\epsilon_1>y_{0}+\epsilon_2=\left( \Psi \left( t\right) -\Psi \left( 0\right) \right)^{1-\xi }r(t, \epsilon_2)|_{t=0}.
$$
Again by applying  Theorem \ref{tha4.1}, we have
$$
r(\cdot, \epsilon_1)\succ r(\cdot, \epsilon_2) ~\text{in}~ C_{1-\xi ;\, \Psi }(J,\,\R).
$$ 
Proceeding in this way, we obtain, $r(\cdot, \epsilon_n)$ is a bounded below decreasing sequence in $C_{1-\xi ;\, \Psi }(J,\,\R)$. Therefore, it is convergent. Let $ r \in  C_{1-\xi ;\, \Psi }(J,\,\R)$ such that 
$$\left\Vert r(\cdot, \epsilon_n)-r\right\Vert _{C_{1-\xi ;\,\Psi }\left(J,\,\R\right)  }\rightarrow 0~\text{ as}~n\rightarrow\infty.
$$
Then, we have
\begin{equation}\label{a20}
\left( \Psi \left( t\right) -\Psi \left( 0\right) \right)^{1-\xi }r(t)=\lim\limits_{n\rightarrow\infty} \left( \Psi \left( t\right) -\Psi \left( 0\right) \right)^{1-\xi }r(t, \epsilon_n),~t\in J.
\end{equation}
 We show that the convergence in  \eqref{a20} is uniform on $J$. For this it is enough to prove that the sequence $\{\left( \Psi \left( t\right) -\Psi \left( 0\right) \right)^{1-\xi }r(t, \epsilon_n)\}$ is equicontinious on $J$. Let $t_1, t_2\in J$ with $t_1>t_2$ be arbitrary. Then,
\begin{small}
\begin{align*}
&\left| \left( \Psi \left( t_1\right) -\Psi \left( 0\right) \right)^{1-\xi }r(t_1, \epsilon_n)-\left( \Psi \left( t_2\right) -\Psi \left( 0\right) \right)^{1-\xi }r(t_2, \epsilon_n)\right|\nonumber \\
&=\left| f(t_1,r(t_1, \epsilon_n))\left\lbrace  \frac{y_{0}+\epsilon_n}{f(0,r(0, \epsilon_n))}+\frac{ \left( \Psi \left( t_1\right) -\Psi \left( 0\right) \right)
^{1-\xi }}{\Gamma \left( \mu
\right) }\int_{0}^{t_1}\Psi'(s)(\Psi(t_1)-\Psi(s))^{\mu-1} \left(  g(s,r(s,\epsilon_n))+\epsilon_n\right)  \,ds\right\rbrace \right.\nonumber\\
&\left.- f(t_2,r(t_2, \epsilon_n))\left\lbrace  \frac{y_{0}+\epsilon_n}{f(0,r(0, \epsilon_n))}+\frac{ \left( \Psi \left( t_2\right) -\Psi \left( 0\right) \right)
^{1-\xi }}{\Gamma \left( \mu
\right) }\int_{0}^{t_2}\Psi'(s)(\Psi(t_2)-\Psi(s))^{\mu-1} \left(   g(s,r(s,\epsilon_n))+\epsilon_n\right)   \,ds\right\rbrace \right|\nonumber\\
&=\left| f(t_1,r(t_1, \epsilon_n))\left\lbrace \frac{y_{0}+\epsilon_n}{f(0,r(0, \epsilon_n))}+\frac{ \left( \Psi \left( t_1\right) -\Psi \left( 0\right) \right)
^{1-\xi }}{\Gamma \left( \mu
\right) }\int_{0}^{t_1}\Psi'(s)(\Psi(t_1)-\Psi(s))^{\mu-1} \left(  g(s,r(s,\epsilon_n))+\epsilon_n\right)  \,ds\right\rbrace \right.\nonumber\\
&\left.-f(t_2,r(t_2, \epsilon_n))\left\lbrace \frac{y_{0}+\epsilon_n}{f(0,r(0, \epsilon_n))}+\frac{ \left( \Psi \left( t_1\right) -\Psi \left( 0\right) \right)
^{1-\xi }}{\Gamma \left( \mu
\right) }\int_{0}^{t_1}\Psi'(s)(\Psi(t_1)-\Psi(s))^{\mu-1}\left(  g(s,r(s,\epsilon_n))+\epsilon_n\right)  \,ds\right\rbrace \right.\nonumber\\
&\left.+f(t_2,r(t_2, \epsilon_n))\left\lbrace  \frac{y_{0}+\epsilon_n}{f(0,r(0, \epsilon_n))}+\frac{ \left( \Psi \left( t_1\right) -\Psi \left( 0\right) \right)
^{1-\xi }}{\Gamma \left( \mu
\right) }\int_{0}^{t_1}\Psi'(s)(\Psi(t_1)-\Psi(s))^{\mu-1} \left(  g(s,r(s,\epsilon_n))+\epsilon_n\right)   \,ds\right\rbrace \right.\nonumber\\
&\left.- f(t_2,r(t_2, \epsilon_n))\left\lbrace  \frac{y_{0}+\epsilon_n}{f(0,r(0, \epsilon_n))}+\frac{ \left( \Psi \left( t_2\right) -\Psi \left( 0\right) \right)
^{1-\xi }}{\Gamma \left( \mu
\right) }\int_{0}^{t_2}\Psi'(s)(\Psi(t_2)-\Psi(s))^{\mu-1}\left(  g(s,r(s,\epsilon_n))+\epsilon_n\right) \,ds\right\rbrace \right| \nonumber \\
&\leq\left| f(t_1,r(t_1, \epsilon_n))-f(t_2,r(t_2, \epsilon_n))\right|\times\nonumber\\
&\qquad \left\lbrace \left|  \frac{y_{0}+\epsilon_n}{f(0,r(0,\epsilon_n))}\right| +\frac{ \left( \Psi \left( t_1\right) -\Psi \left( 0\right) \right)
^{1-\xi }}{\Gamma \left( \mu
\right) }\int_{0}^{t_1}\Psi'(s)(\Psi(t_1)-\Psi(s))^{\mu-1}\left(   \left|  g(s,r(s,\epsilon_n))\right| +\epsilon_n\right)   \,ds\right\rbrace\nonumber \\
& +\left| f(t_2,r(t_2, \epsilon_n))\right| \left\lbrace \frac{ \left( \Psi \left( t_1\right) -\Psi \left( 0\right) \right)
^{1-\xi }}{\Gamma \left( \mu
\right) }\int_{0}^{t_1}\Psi'(s)(\Psi(t_1)-\Psi(s))^{\mu-1}\left(   \left|  g(s,r(s,\epsilon_n))\right| +\epsilon_n\right)    \,ds \right.\nonumber\\
&\left.\qquad-\frac{ \left( \Psi \left( t_2\right) -\Psi \left( 0\right) \right)
^{1-\xi }}{\Gamma \left( \mu
\right) }\int_{0}^{t_2}\Psi'(s)(\Psi(t_2)-\Psi(s))^{\mu-1}\left(  \left|  g(s,r(s,\epsilon_n))\right| +\epsilon_n\right)  \,ds\right\rbrace .
\end{align*}
\end{small}
Using the hypothesis (H2), we have $ \left|  g(t,r(t,\epsilon_n))\right| +\epsilon_n\leq  \left\| h\right\|_\infty+\epsilon_n,\,t\in J$. Therefore, from the above inequality, we obtain 
 \begin{align}\label{a21}
 &\left| \left( \Psi \left( t_1\right) -\Psi \left( 0\right) \right)^{1-\xi }r(t_1, \epsilon_n)-\left( \Psi \left( t_2\right) -\Psi \left( 0\right) \right)^{1-\xi }r(t_2, \epsilon_n)\right|\nonumber\\
&\leq\left| f(t_1,r(t_1, \epsilon_n))-f(t_2,r(t_2, \epsilon_n))\right| \left\lbrace \left|  \frac{y_{0}+\epsilon_n}{f(0,r(0, \epsilon_n))}\right| +\frac{\left( \left\| h\right\|_\infty+\epsilon_n\right) \left( \Psi \left( t_1\right) -\Psi \left( 0\right) \right)
^{\mu+1-\xi }}{\Gamma \left( \mu+1
\right) }\right\rbrace \nonumber\\
& \qquad+K^* \left( \left\| h\right\|_\infty+\epsilon_n\right)  \left\lbrace  \frac{ \left( \Psi \left( t_1\right) -\Psi \left( 0\right) \right)^{\mu+1-\xi }}{\Gamma \left( \mu+1\right) }- \frac{\left( \Psi \left( t_2\right) -\Psi \left( 0\right) \right)^{\mu+1-\xi }}{\Gamma \left( \mu+1\right) }\right\rbrace ,
\end{align}
where $ K^*=\underset{(t, y)\in J\times[-R,\,R]}{\sup }\left| f(t,r(t, \epsilon_n))\right| $.
Since, $f$ is continuous on compact set $J\times[-R,\,R]$, we have
\begin{equation}\label{aa22}
\left| f(t_1,r(t_1, \epsilon_n))-f(t_2,r(t_2, \epsilon_n))\right| \rightarrow 0 ~\text{ as}~ \left|t_1- t_2\right|\rightarrow 0
\end{equation}
uniformly for all $n\in\N\cup \{0\}$. Using the condition \eqref{aa22} and continuity of $\Psi$ function in the inequality \eqref{a21}, we obtain
$$
\left| \left( \Psi \left( t_1\right) -\Psi \left( 0\right) \right)^{1-\xi }r(t_1, \epsilon_n)-\left( \Psi \left( t_2\right) -\Psi \left( 0\right) \right)^{1-\xi }r(t_2, \epsilon_n)\right|\rightarrow 0~\text{ as}~  \left|t_1- t_2\right|\rightarrow 0.
$$
This proves $\{\left( \Psi \left( t\right) -\Psi \left( 0\right) \right)^{1-\xi }r(t, \epsilon_n)\}$ is equicontinuous and hence 
$\left( \Psi \left( t\right) -\Psi \left( 0\right) \right)^{1-\xi }r(t, \epsilon_n)\rightarrow \left( \Psi \left( t\right) -\Psi \left( 0\right) \right)^{1-\xi }r(t)$ converges uniformly on $J$.      
Next, we prove  that $r\in {C_{1-\xi ;\,\Psi }\left(J,\,\R\right)  }$ is a solution of the hybrid FDEs \eqref{eqq1}-\eqref{eqq2}. Since $r(\cdot, \epsilon_n)\in {C_{1-\xi ;\,\Psi }\left(J,\,\R\right)  }$ is a solution of the hybrid FDEs  \eqref{eqq13}, we have 
$$
r(t, \epsilon_n)=f(t,r(t, \epsilon_n))\left\lbrace \frac{y_{0}+\epsilon_n}{f(0,r(0,\,\epsilon_n))}\left( \Psi \left( t\right) -\Psi \left( 0\right) \right)^{\xi-1 }+\mathcal{I}_{0^+}^{\mu\,;\, \Psi}\left( g(t,r(t, \epsilon_n))+\epsilon_n\right) \right\rbrace , ~ t\in (0,T].
$$
Using the continuity of functions $f$ and $g$, taking the limit as $n\rightarrow \infty$ in the above equation, we get
$$
r(t)=f(t,r(t))\left\lbrace \frac{y_{0}}{f(0, r(0))}\left( \Psi \left( t\right) -\Psi \left( 0\right) \right)^{\xi-1 }+\mathcal{I}_{0^+}^{\mu\,;\, \Psi}g(t,r(t))\right\rbrace , ~ t\in (0,T].
$$
Thus, $r(t)$ is a solution of the hybrid FDEs \eqref{eqq1}-\eqref{eqq2}. 
From the inequality \eqref{a19} it follows that 
$
u\preceq \lim\limits_{n\rightarrow \infty}r(\cdot, \epsilon_n)~ \text{in}~ {C_{1-\xi ;\,\Psi }\left(J,\,\R\right)  }
$. Therefore $
u\preceq r~ \text{in}~ {C_{1-\xi ;\,\Psi }\left(J,\,\R\right)  }
$.
This proves  $r$ is a maximal solution  the hybrid FDEs \eqref{eqq1}-\eqref{eqq2} in $C_{1-\xi ;\,\Psi }\left(J,\,\R\right)$. 
\end{proof}
\begin{rem}
The confirmation of the existence of minimal solution for the hybrid FDEs \eqref{eqq1}-\eqref{eqq2} one can finish on comparable lines.
\end{rem}

\section{Comparison Theorems}
\begin{theorem}\label{tha6.1}
Assume that the hypotheses {\normalfont(H1)-(H2)} and the condition  \eqref{aa1} hold. If there exists a function $u\in C_{1-\xi ;\, \Psi }(J,\,\R)$ such that
\begin{align}
&^H \mathcal{D}^{\mu,\,\nu\,;\, \Psi}_{0^+}\left[ \frac{u(t)}{ f(t, u(t))}\right] 
\leq g(t, u(t)),~a.e. ~t \in  (0,\,T],  ~\label{a22}\\
&\left( \Psi \left( t\right) -\Psi \left( 0\right) \right)^{1-\xi }u(t)|_{t=0}\leq y_0,\label{a23}
\end{align}
then
$$
u\preceq r~\text{in}~ C_{1-\xi ;\,\Psi }\left(J,\,\R\right),
$$
 where $r$ is a maximal solution of the hybrid FDEs \eqref{eqq1}-\eqref{eqq2}.
\end{theorem}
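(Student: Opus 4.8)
The plan is to combine the explicit construction of the maximal solution $r$ carried out in the proof of Theorem~\ref{tha5.2} with the strict comparison principle of Theorem~\ref{tha4.1}. Recall that there $r$ arises as the uniform limit, in $C_{1-\xi;\,\Psi}(J,\R)$, of a sequence $\{r(\cdot,\epsilon_n)\}$ of solutions of the perturbed hybrid problems \eqref{eqq13}, where $\{\epsilon_n\}$ is a decreasing sequence of positive reals with $\epsilon_n\to 0$. Because $r(\cdot,\epsilon_n)$ solves \eqref{eqq13}, it satisfies the strict differential inequality
$$
^H \mathcal{D}^{\mu,\,\nu\,;\, \Psi}_{0^+}\left[ \frac{r(t,\epsilon_n)}{ f(t, r(t,\epsilon_n))}\right] = g(t, r(t,\epsilon_n))+\epsilon_n > g(t, r(t,\epsilon_n)),~a.e.~t\in(0,T].
$$

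First I would fix $n$ and apply Theorem~\ref{tha4.1} with the choices $y=u$ and $z=r(\cdot,\epsilon_n)$. The hypothesis \eqref{a22} furnishes the non-strict inequality \eqref{a4}, while the displayed inequality for $r(\cdot,\epsilon_n)$ furnishes \eqref{a5} in strict form, so exactly one of the two inequalities is strict, as Theorem~\ref{tha4.1} requires. For the initial data, the assumption \eqref{a23} together with the initial condition in \eqref{eqq13} gives
$$
\left( \Psi \left( t\right) -\Psi \left( 0\right) \right)^{1-\xi }u(t)|_{t=0}\leq y_0 < y_0+\epsilon_n = \left( \Psi \left( t\right) -\Psi \left( 0\right) \right)^{1-\xi }r(t,\epsilon_n)|_{t=0},
$$
so the strict initial ordering demanded by Theorem~\ref{tha4.1} holds. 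I therefore conclude $u\prec r(\cdot,\epsilon_n)$ in $C_{1-\xi;\,\Psi}(J,\R)$, that is,
$$
\left( \Psi \left( t\right) -\Psi \left( 0\right) \right)^{1-\xi }u(t) < \left( \Psi \left( t\right) -\Psi \left( 0\right) \right)^{1-\xi }r(t,\epsilon_n),~t\in J,
$$
for every $n\in\N\cup\{0\}$.

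Finally I would let $n\to\infty$. By the uniform convergence $\left( \Psi(t)-\Psi(0)\right)^{1-\xi}r(t,\epsilon_n)\to\left( \Psi(t)-\Psi(0)\right)^{1-\xi}r(t)$ on $J$ established inside the proof of Theorem~\ref{tha5.2}, the pointwise strict inequalities above degrade in the limit to
$$
\left( \Psi \left( t\right) -\Psi \left( 0\right) \right)^{1-\xi }u(t)\leq\left( \Psi \left( t\right) -\Psi \left( 0\right) \right)^{1-\xi }r(t),~t\in J,
$$
which is precisely $u\preceq r$ in $C_{1-\xi;\,\Psi}(J,\R)$. The argument is essentially routine once Theorems~\ref{tha4.1} and \ref{tha5.2} are available; the only delicate point is that a strict inequality need not survive a limit, so the best one can assert after passing to the limit is the non-strict ordering $\preceq$, which is exactly the conclusion sought.
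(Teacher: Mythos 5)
Your proposal is correct and follows essentially the same route as the paper: both compare $u$ against the solutions of the $\epsilon$-perturbed problems \eqref{eqq11}--\eqref{eqq12} (which satisfy the strict differential inequality and the strict initial ordering), invoke Theorem \ref{tha4.1} to get $u\prec r(\cdot,\epsilon)$, and then pass to the limit using the uniform convergence established in the proof of Theorem \ref{tha5.2}, with the strict inequality relaxing to $\preceq$. The only cosmetic difference is that you work with the discrete sequence $\{\epsilon_n\}$ from the construction in Theorem \ref{tha5.2} rather than a continuous parameter $\epsilon\to 0$, which changes nothing of substance.
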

\begin{proof}
Let $\epsilon>0$ be arbitrary small. By Theorem \ref{tha5.2}, the hybrid FDEs \eqref{eqq11}-\eqref{eqq12} has a maximal solution $ r(\cdot, \epsilon) \in C_{1-\xi ;\,\Psi }\left(J,\,\R\right)$. Further, the  limit 
\begin{equation}\label{a24}
\left( \Psi \left( t\right) -\Psi \left( 0\right) \right)^{1-\xi }r(t)=\lim\limits_{\epsilon\rightarrow0} \left( \Psi \left( t\right) -\Psi \left( 0\right) \right)^{1-\xi }r(t, \epsilon)
\end{equation}
 is uniform on $J$, where  $r \in C_{1-\xi ;\,\Psi }\left(J,\,\R\right) $ is a  maximal solution of the hybrid FDEs \eqref{eqq1}-\eqref{eqq2}. As $ r(t, \epsilon)$ is a maximal solution  of the hybrid FDEs \eqref{eqq11}-\eqref{eqq12}. Therefore,
 \begin{align}
& ^H \mathcal{D}^{\mu,\,\nu\,;\, \Psi}_{0^+}\left[ \frac{ r(t, \epsilon)}{ f(t,  r(t, \epsilon))}\right] 
 = g(t,  r(t, \epsilon))+\epsilon,~a.e. ~t \in  (0,\,T],  ~\label{eqq21}\\
 &\left( \Psi \left( t\right) -\Psi \left( 0\right) \right)^{1-\xi } r(t, \epsilon)|_{t=0}=y_{0}+\epsilon \in\R .\label{eqq22}
 \end{align}
 From the above equations, we have 
  \begin{align}
 & ^H \mathcal{D}^{\mu,\,\nu\,;\, \Psi}_{0^+}\left[ \frac{ r(t, \epsilon)}{ f(t,  r(t, \epsilon))}\right] 
  > g(t,  r(t, \epsilon)),~a.e. ~t \in  (0,\,T],  ~\label{eqq31}\\
  &\left( \Psi \left( t\right) -\Psi \left( 0\right) \right)^{1-\xi } r(t, \epsilon)|_{t=0}>y_{0}\in\R .\label{eqq32}
  \end{align}
  From equations \eqref{a23} and \eqref{eqq32}, we have
  \begin{equation}\label{a25}
   \left( \Psi \left( t\right) -\Psi \left( 0\right) \right)^{1-\xi } r(t, \epsilon)|_{t=0}> \left( \Psi \left( t\right) -\Psi \left( 0\right) \right)^{1-\xi } u(t)|_{t=0}.
  \end{equation}
 Applying Theorem \ref{tha4.1}, from  the inequalities   \eqref{a22},  \eqref{eqq31} and  \eqref{a25}, we obtain
 $$
 u\prec r(\cdot,  \epsilon) ~\text{in}~ C_{1-\xi ;\,\Psi }\left(J,\,\R\right).
 $$
 Taking the limit  $\epsilon\rightarrow 0$ of above inequality and  using the equation  \eqref{a24}, we obtain
 $$
  u\preceq r~\text{in}~ C_{1-\xi ;\,\Psi }\left(J,\,\R\right).
  $$
\end{proof}
 
 The proof of the following theorem relating to the comparison of minimal and upper solution can be finished on the comparable lines of the proof of Theorem \ref{tha6.1}.
\begin{theorem}\label{tha6.2}
Assume that the hypotheses {\normalfont(H1)-(H2)} and the condition  \eqref{aa1} hold. If there exist a function $v\in C_{1-\xi ;\, \Psi }(J,\,\R)$ such that
\begin{align*}
&^H \mathcal{D}^{\mu,\,\nu\,;\, \Psi}_{0^+}\left[ \frac{v(t)}{ f(t, v(t))}\right] 
\geq g(t, v(t)),~a.e. ~t \in  (0,\,T],  \\
&\left( \Psi \left( t\right) -\Psi \left( 0\right) \right)^{1-\xi }v(t)|_{t=0}\geq y_0,
\end{align*}
then
$$
q\preceq v~\text{in}~ C_{1-\xi ;\,\Psi }\left(J,\,\R\right),
$$ where $q$ is a minimal solution of the hybrid FDEs \eqref{eqq1}-\eqref{eqq2}.
\end{theorem}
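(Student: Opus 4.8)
The plan is to dualize the argument of Theorem \ref{tha6.1}, this time approaching the minimal solution $q$ from below by means of a family of perturbed problems carrying a \emph{negative} perturbation $-\epsilon$. First I would fix an arbitrary small $\epsilon>0$ and consider the perturbed hybrid FDEs
$$
^H \mathcal{D}^{\mu,\,\nu\,;\, \Psi}_{0^+}\left[ \frac{y(t)}{ f(t, y(t))}\right] = g(t, y(t))-\epsilon,\qquad \left( \Psi \left( t\right) -\Psi \left( 0\right) \right)^{1-\xi }y(t)|_{t=0}=y_{0}-\epsilon.
$$
By the minimal-solution analogue of Theorem \ref{tha5.2} (available via the Remark following it), for all sufficiently small $\epsilon$ this problem admits a minimal solution $q(\cdot,\epsilon)\in C_{1-\xi ;\, \Psi }(J,\,\R)$, and $\{q(\cdot,\epsilon)\}$ converges, uniformly in the weighted sense on $J$, to a minimal solution $q$ of \eqref{eqq1}-\eqref{eqq2} as $\epsilon\to 0$.

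Next I would record the two strict facts produced by the negative perturbation. Since $q(\cdot,\epsilon)$ solves the $-\epsilon$ problem,
$$
^H \mathcal{D}^{\mu,\,\nu\,;\, \Psi}_{0^+}\left[ \frac{q(t,\epsilon)}{ f(t, q(t,\epsilon))}\right] = g(t, q(t,\epsilon))-\epsilon < g(t, q(t,\epsilon)),\quad a.e.~t\in(0,T],
$$
while at the initial point, using the hypothesis on $v$,
$$
\left( \Psi \left( t\right) -\Psi \left( 0\right) \right)^{1-\xi }q(t,\epsilon)|_{t=0}=y_{0}-\epsilon < y_{0}\leq \left( \Psi \left( t\right) -\Psi \left( 0\right) \right)^{1-\xi }v(t)|_{t=0}.
$$

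With these in hand, I would invoke Theorem \ref{tha4.1} with the lower function $q(\cdot,\epsilon)$ in the role of $y$ (it satisfies the $\le g$ inequality, here strict) and $v$ in the role of $z$ (it satisfies the $\ge g$ inequality). The strict initial inequality above then yields $q(\cdot,\epsilon)\prec v$ in $C_{1-\xi ;\, \Psi }(J,\,\R)$ for every admissible $\epsilon$. Letting $\epsilon\to 0$ and using the uniform convergence $q(\cdot,\epsilon)\to q$, the strict ordering relaxes to $q\preceq v$ in $C_{1-\xi ;\, \Psi }(J,\,\R)$, which is the desired conclusion.

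The main obstacle is conceptual rather than computational: one must select the sign of the perturbation correctly so that the approximating family brackets the minimal solution from below, and so that, within the hypotheses of Theorem \ref{tha4.1}, $q(\cdot,\epsilon)$ occupies the $\le g$ slot while $v$ occupies the $\ge g$ slot, with one inequality strict. Once these roles are matched, the only genuinely technical ingredient is the existence and uniform convergence of the minimal approximants $q(\cdot,\epsilon)$ (the dual of Theorem \ref{tha5.2}), which follows by the same monotonicity, equicontinuity, and Arzel\`{a}--Ascoli reasoning employed there.
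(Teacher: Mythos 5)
Your proposal is correct and is exactly the argument the paper intends: the paper gives no separate proof of this theorem, stating only that it ``can be finished on the comparable lines of the proof of Theorem \ref{tha6.1},'' and your dualization (perturbing by $-\epsilon$, taking minimal approximants $q(\cdot,\epsilon)$ with the strict inequality now sitting in the $\le g$ slot of Theorem \ref{tha4.1}, and passing to the limit) is precisely that comparable line. The roles of $q(\cdot,\epsilon)$ and $v$ in Theorem \ref{tha4.1} and the sign of the perturbation are matched correctly, so no gap remains.
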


\section{Uniqueness of solution} 
In the accompanying theorem, we demonstrate the uniqueness of the solution to the hybrid FDEs \eqref{eqq1}-\eqref{eqq2} through the Theorem \ref{tha6.1}.

 \begin{theorem}\label{tha6.3}
 Assume that the hypotheses {\normalfont(H1)-(H2)} and the condition \eqref{aa1} hold. Also if there exists a function $G:J\times\R\rightarrow\R$ such that
 \begin{equation}\label{a26}
g(t, y_1)-g(t, y_2)\leq G\left( t,\, \frac{y_1}{f(t, y_1)}- \frac{y_2}{f(t, y_2)}\right),~a.e. ~t \in  (0,\,T], 
 \end{equation}
 for all $y_1, y_2\in \R$ with $y_1>y_2$.
 If the identically zero function is the only solution of the $\Psi$-Hilfer FDEs
\begin{align*}
&^H \mathcal{D}^{\mu,\,\nu\,;\, \Psi}_{0^+}m(t)
=G(t, m(t)),~t \in  (0,\,T],  \\
&\left( \Psi \left( t\right) -\Psi \left( 0\right) \right)^{1-\xi }m(t)|_{t=0}=0,
 \end{align*}
then, the hybrid FDEs \eqref{eqq1}-\eqref{eqq2}  has a unique solution.
 \end{theorem}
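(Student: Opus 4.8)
The plan is to reduce uniqueness to the coincidence of the maximal and minimal solutions. By Theorem~\ref{tha5.2} the hybrid FDEs \eqref{eqq1}-\eqref{eqq2} possess a maximal solution $r$ and, by the Remark following it, a minimal solution $q$ in $C_{1-\xi;\Psi}(J,\R)$. Applying the comparison Theorem~\ref{tha6.1} and its dual Theorem~\ref{tha6.2} to an arbitrary solution $y$ (which trivially satisfies both $^{H}\mathcal{D}^{\mu,\nu;\Psi}_{0^+}[y/f(\cdot,y)]\le g(\cdot,y)$ and $\ge g(\cdot,y)$ with initial value exactly $y_0$), every solution is squeezed as $q\preceq y\preceq r$ in $C_{1-\xi;\Psi}(J,\R)$. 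Hence it suffices to prove $r=q$, for then any solution equals this common value and uniqueness follows.

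To compare $r$ and $q$ I would pass to the transformed functions $R(t)=r(t)/f(t,r(t))$ and $Q(t)=q(t)/f(t,q(t))$ and set $m=R-Q$. Since $q\preceq r$ and the map $v\mapsto v/f(t,v)$ is increasing by \mbox{(H1)(i)}, one gets $m\succeq 0$, that is $(\Psi(t)-\Psi(0))^{1-\xi}m(t)\ge 0$ on $J$. Because $r$ and $q$ are genuine solutions, $^{H}\mathcal{D}^{\mu,\nu;\Psi}_{0^+}R=g(\cdot,r)$ and $^{H}\mathcal{D}^{\mu,\nu;\Psi}_{0^+}Q=g(\cdot,q)$, so $^{H}\mathcal{D}^{\mu,\nu;\Psi}_{0^+}m=g(\cdot,r)-g(\cdot,q)$. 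On the set where $r(t)>q(t)$ the structural hypothesis \eqref{a26} yields $^{H}\mathcal{D}^{\mu,\nu;\Psi}_{0^+}m(t)\le G\bigl(t,R(t)-Q(t)\bigr)=G(t,m(t))$, while at points where $r(t)=q(t)$ both sides vanish (observe that the zero function solving the auxiliary $G$-problem forces $G(t,0)=0$). Thus $m$ is a lower solution of the auxiliary $\Psi$-Hilfer problem $^{H}\mathcal{D}^{\mu,\nu;\Psi}_{0^+}w=G(t,w)$, and one checks, using the equivalent integral representation and the common initial condition of $r$ and $q$, that the weighted value of $m$ at $t=0$ vanishes, matching the zero initial data of that auxiliary problem.

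The concluding step is a comparison argument for the auxiliary equation $^{H}\mathcal{D}^{\mu,\nu;\Psi}_{0^+}w=G(t,w)$ with weighted initial value $0$, which is the case $f\equiv1$ of the present framework. By hypothesis its only, and hence its maximal, solution is the identically zero function, so trapping the lower solution $m$ below this maximal solution along the lines of Theorem~\ref{tha6.1} gives $m\preceq 0$. Combined with $m\succeq 0$ this forces $m\equiv 0$ in $C_{1-\xi;\Psi}(J,\R)$, hence $R=Q$; the injectivity granted by \mbox{(H1)(i)} of $v\mapsto v/f(t,v)$ then yields $r=q$, which completes the proof.

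I expect the main obstacle to be exactly this final comparison step: one must legitimately regard $m$ as a lower solution of the $G$-equation and bound it below the maximal solution $0$, which requires verifying that the inequality and comparison machinery of Theorems~\ref{tha4.1}-\ref{tha6.1} applies with $f\equiv1$ and $g=G$, and, most delicately, controlling the weighted boundary term of $m$ at $t=0$ so that the hypothesis on the $G$-problem can be invoked. Confirming $G(t,0)=0$ and handling the points where $r=q$ are the remaining bookkeeping details.
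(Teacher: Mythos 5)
Your proof is correct and rests on the same central mechanism as the paper's: form $m$ as the difference of the transformed functions $y/f(\cdot,y)$ of two solutions, use \eqref{a26} to obtain $^{H}\mathcal{D}^{\mu,\,\nu\,;\,\Psi}_{0^+}m\leq G(t,m)$ with vanishing weighted initial value, and then invoke the comparison Theorem~\ref{tha6.1} with $f\equiv 1$, $g=G$, $y_0=0$, together with the hypothesis that the zero function is the (hence maximal) solution of the $G$-problem, to conclude $m\preceq 0$. The one genuine difference is the choice of the pair being compared: the paper takes two arbitrary solutions $u_1,u_2$ and assumes $u_1\succ u_2$, so that $m\succ 0$ and $m\preceq 0$ yield a contradiction, whereas you compare the maximal solution $r$ with the minimal solution $q$, obtaining $m\succeq 0$ and $m\preceq 0$, hence $m\equiv 0$, $R=Q$, and $r=q$ by the injectivity from (H1)(i). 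Your variant is actually the tighter one: since $\preceq$ is only a partial order, two distinct solutions need not be comparable, so the paper's reduction to the case $u_1\succ u_2$ is not automatic, while $q\preceq r$ holds by the very definition of extremal solutions and every solution is squeezed between them. Both versions share the same loose ends, which you correctly flag: \eqref{a26} is only assumed for $y_1>y_2$ (you patch the points where $r=q$ via $G(t,0)=0$, which indeed follows from the zero function solving the $G$-equation), and applying Theorem~\ref{tha6.1} to the auxiliary problem formally requires (H1)--(H2) and \eqref{aa1} for the pair $(1,G)$, which neither the theorem statement nor the paper's own proof actually secures for $G$.
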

 \begin{proof}
 By Theorem \ref{tha3.2},  the hybrid FDEs \eqref{eqq1}-\eqref{eqq2}  has a  solution in $C_{1-\xi ;\,\Psi }\left(J,\,\R\right)$. Suppose that $u_1$ and $u_2$ are two solutions of the hybrid FDEs \eqref{eqq1}-\eqref{eqq2} with $u_1\succ u_2$ in $C_{1-\xi ;\,\Psi }\left(J,\,\R\right)$.  Define a function $m:(0,T]\rightarrow\R$ by 
 $$
 m(t)=\frac{u_1(t)}{f(t, u_1(t))}- \frac{u_2(t)}{f(t, u_2(t))},~ t\in (0,T]. 
 $$
 In the view of hypothesis (H1)(i), we obtain 
 \begin{equation}\label{aa4}
 m\succ 0 ~\mbox{in} ~C_{1-\xi ;\,\Psi }\left(J,\,\R\right).
 \end{equation} 
 Using the inequality \eqref{a26}, we get
 \begin{align*}
^H\mathcal{D}^{\mu,\,\nu\,;\, \Psi}_{0^+}m(t)
&=\,^H\mathcal{D}^{\mu,\,\nu\,;\, \Psi}_{0^+}\left[ \frac{u_1(t)}{f(t, u_1(t))}\right] -\, ^H\mathcal{D}^{\mu,\,\nu\,;\, \Psi}_{0^+}\left[ \frac{u_2(t)}{f(t, u_2(t))}\right] \\
&=g(t, u_1(t))-g(t, u_2(t))\\
&\leq G\left( t, \frac{u_1(t)}{f(t, u_1(t))}- \frac{u_2(t)}{f(t, u_2(t))}\right) \\
&= G(t, m(t)).
 \end{align*}
 Further,
 \begin{align*}
 \left( \Psi \left( t\right) -\Psi \left( 0\right) \right)^{1-\xi }m(t)|_{t=0}&=\left( \Psi \left( t\right) -\Psi \left( 0\right) \right)^{1-\xi }u_1(t)|_{t=0}-\left( \Psi \left( t\right) -\Psi \left( 0\right) \right)^{1-\xi }u_2(t)|_{t=0}=0.
 \end{align*}
 Therefore,
 \begin{align}\label{A}
 \begin{cases}
&^H\mathcal{D}^{\mu,\,\nu\,;\, \Psi}_{0^+}m(t)\leq G(t, m(t)), t\in(0,\,T],\\
 &\left( \Psi \left( t\right) -\Psi \left( 0\right) \right)^{1-\xi }m(t)|_{t=0}=0.
 \end{cases} 
 \end{align}
 By assumption the  identically zero function is the maximal solution of 
 \begin{align}\label{eqq4}
   \begin{cases}
 &^H \mathcal{D}^{\mu,\,\nu\,;\, \Psi}_{0^+}m(t)
 =G(t, m(t)),~t \in  (0,\,T],  \\
 &\left( \Psi \left( t\right) -\Psi \left( 0\right) \right)^{1-\xi }m(t)|_{t=0}=0.
 \end{cases}
  \end{align}
 Applying Theorem \ref{tha6.1} to the problems \eqref{A} and \eqref{eqq4}  with $f(t,\,x)=1$, $g=G$ and $y_0=0$,  we obtain
\begin{equation}\label{a27}
 m\preceq0~\text{in}~C_{1-\xi ;\,\Psi }\left(J,\,\R\right)
\end{equation}
The equation \eqref{a27} contradicts to the equation \eqref{aa4}. Thus, we must have $u_1=u_2$ in $C_{1-\xi ;\,\Psi }\left(J,\,\R\right)$.
 \end{proof}
 

\section*{Conclusion}
Existence and uniqueness of solution,  fractional differential inequalities and comparison results acquired in the present paper for $\Psi$-Hilfer hybrid FDEs includes the study of \cite{V. Lak1, Dhage1, Zhao,  Lak3}. Since the $\Psi$-Hilfer fractional derivative gives distinctive fractional derivative operators for different values of the parameters $\mu$, $\nu$ and the function $\Psi$, the outcomes of the present paper are also valid for the derivative operators listed in \cite{Vanterler1} as its special cases.  Further, the fractional integral inequalities and comparison results obtained in the setting of $\Psi$-Hilfer derivative can be utilized to analyze the various qualitative and quantitative properties of solutions for a different classes of $\Psi$-Hilfer hybrid FDEs.

\section*{Acknowledgment}
The first author  acknowledges the Science and Engineering Research Board (SERB), New Delhi, India for the Research Grant (Ref: File no. EEQ/2018/000407).


\end{document}